\numberwithin{equation}{section}
\begin{document}
\title{Towards characterization of all $3\times3$ extremal quasiconvex quadratic forms}
\author{Davit Harutyunyan and Graeme Walter Milton\\
\textit{Department of Mathematics, The University of Utah}}

\maketitle
\begin{abstract}
Given a $d\times d$ quasiconvex quadratic form, $d\geq 3,$ we prove that if the determinant of
its acoustic tensor is an irreducible extremal polynomial that is not identically zero, then the form itself is
an extremal quasiconvex quadratic form, i.e. it loses its quasiconvexity whenever a convex quadratic form is subtracted from it.
In the special case $d=3,$ we slightly weaken the condition, namely we prove, that if the determinant of the acoustic tensor of
the quadratic form is an extremal polynomial that is not a perfect square, then the form itself is an extremal quadratic form.
In the case $d=3$ we also prove, that if the determinant of the acoustic tensor of the form is identically zero, then the form
is either an extremal or polyconvex. Also, if the determinant of the acoustic tensor of the form is a perfect square, then the form
is either extremal, polycovex, or is a sum of a rank-one form and an extremal, whose acoustic tensor determinant is identically zero.
Here we use the notion of extremality introduced by Milton in [\ref{bib:Mil3}].\\

\textbf{Keywords:}\ \  Positive biquadratic forms, Sums of squares, Extremal quasiconvex functions, polyconvexity, rank-one convexity,
\newline
\linebreak
\textbf{Mathematics Subject Classification:}\ \ 12D15, 12E10, 15A63, 49J40, 70G75, 74B05, 74B20,
\end{abstract}

\newtheorem{Theorem}{Theorem}[section]
\newtheorem{Lemma}[Theorem]{Lemma}
\newtheorem{Corollary}[Theorem]{Corollary}
\newtheorem{Remark}[Theorem]{Remark}
\newtheorem{Definition}[Theorem]{Definition}
\newtheorem{Conjecture}[Theorem]{Conjecture}
\section{Introduction}
\label{sec:1}

Convexity is intimately tied with theromodynamic stability: if the free energy of a material is not a convex function of the state variables,
then when the material has state variables in the non-convex portion, it will naturally phase separate into two phases, each at the end of a tie
line bridging the non-convex portion, thus reducing the energy to a linear average of the energies of the two constituent phases (see
[\ref{bib:Cal}] and the introduction by Arthur Wightman in the book [\ref{bib:Isr}]).
For elasticity, among other examples, the picture is more complicated as when the material phase separates the displacement
field (ignoring the possiblity of cracking) must be continuous across the phase boundaries. Such phase separation is most easily
seen in shape memory materials such as Nitinol. The simplest of all geometries for the phase separated material is a laminate
of the two phases (also known as a twinned structure when the two phases are reflections of each other) and the continuity of the displacement field forces the difference of
the displacement gradient in one phase minus the displacement field in the second phase to be a rank-one tensor. Thus to avoid this layering transformation the
energy $f$ as a function of the displacement gradient  must be a rank one convex function i.e.
\begin{equation}
\label{0.1}
\theta f({A})+(1-\theta )f({B})\geq f(\theta{A}+(1-\theta){B})
\end{equation}
for all $\theta\in (0,1)$ and for all matrices ${A}$ and ${B}$ such that ${A}-{B}={x}\otimes{y}$ for some vectors ${x}$ and ${y}$. More generally to avoid
separation at the microscale into other geometries of possibly lower energy (with affine boundary conditions on the displacement $u$ at the boundary $\partial\Omega$ of the body, i.e.
${u}({x})={F}{x}$ for ${x}\in\partial\Omega$ for some fixed matrix ${F}$), the energy $f(\nabla{u}({x}))$ has to be a quasiconvex
function of $\nabla{u(x)}$. For more details about this theory of the stability of energy functions for non-linear elasticity the reader is referred to [\ref{bib:Ball1}].

The theory of quasiconvexity dates back even further to the pioneering work of Morrey [\ref{bib:Mor1}, \ref{bib:Mor2}] and has become central in the calculus of variations, material science, continuum mechanics, topology optimization, and in the theory of composites.
We refer to the book of Dacorogna [\ref{bib:Dac}] and the references therein for developments of the theory of quasiconvex analysis. In general it
is very difficult to determine if a non-quadratic function is quasiconvex, so Ball [\ref{bib:Ball1}] introduced the notion of polyconvexity, which is an intermediate condition between convexity and quasiconvexity: a function is polyconvex if it is a convex function of $\nabla u(x)$, its determinant, and other minors, these latter being the null-Lagrangians or quasi-affine functions, whose average
only depends on $F$. If $f(\xi)$ is quadratic then $f(\xi)$ is polyconvex if and only if it can be written as a sum of a convex function and a null-Lagrangian, which in the quadratic case is a linear combination of $2\times 2$ minors of the matrix $\xi\in\mathbb R^{N\times n}$,
[\ref{bib:Dac}, page 192, Lemma 5.27]. There exist quadratic forms that are quasiconvex but not polyconvex,
 as shown by Terpstra in [\ref{bib:Terp2}], which in the algebraic geometry language means, there exist nonnegative biquadratic forms
 that are not sums of squares, [\ref{bib:Pra}]. Explicit examples were given by Serre [\ref{bib:Ser1},\ref{bib:Ser2}], and later in [\ref{bib:Har.Mil}] we obtained an
especially simple example:
\begin{equation}
\label{1.1}
f(\xi)=\xi_{11}^2+\xi_{22}^2+\xi_{33}^2+\xi_{12}^2+\xi_{23}^2+\xi_{31}^2-2(\xi_{11}\xi_{22}+\xi_{22}\xi_{33}+\xi_{33}\xi_{11}).
\end{equation}

For linear elasticity  a necessary condition for a body
containing a linearly elastic homogeneous material with elasticity tensor $C$ to be stable
when the displacement is fixed at the boundary is the Legendre-Hadamard condition,
which is equivalent to rank one convexity for $C^2$ functions [\ref{bib:Dac}],
that is, the quadratic form associated with $C$, $f(\xi)=(C\xi;\xi)$
be rank-one convex, i.e.,
 $$ f(x\otimes y)=\sum_{ijk\ell=1}^3x_iy_jC_{ijk\ell}x_ky_\ell\geq 0\quad \forall x,y,$$
which is an algebraic condition. If one has equality for some non-zero $x,y$ then shear bands can form.
For general quadratic forms $f(\xi)=(M\xi;\xi)$, Van Hove [\ref{bib:VanHove1},\ref{bib:VanHove2}]
proved that rank-one convexity is equivalent to quasiconvexity. Due to this, and since we are only dealing with quadratic functions,
we will use the terms quasiconvexity and rank-one convexity interchangeably. Here we are interested in quadratic rank-one convex functions that are extremal in the sense of Milton ([\ref{bib:Mil3}, page 87, see also \ref{bib:Mil1}, section 25.2]):
\begin{Definition}
\label{def:1}
A quadratic quasiconvex form is called an extremal if one cannot subtract a rank-one form from it while preserving the quasiconvexity of the form. Here, a rank-one form is the square of a linear form.
\end{Definition}
Recall, that in traditional convex analysis an \textit{extreme} point of a convex set $S$ in a real vector space, is a point $Q\in S$ which does not lie in any open line segment joining two different points of $S.$ If one considers the convex cone $S$ of all $m\times n$ quasiconvex quadratic forms, then an \textit{extreme} point $Q$ of $S$ in the classical sense should be identified with the whole ray starting at the origin and passing through $Q.$ Of course, in the classical convex analysis setting, one can work with quadratic forms instead of rays by just defining an extreme point as a quasiconvex form that is not a sum of two linearly independent quasiconvex forms. As we are interested only in quasiconvex quadratic forms, in what follows we identify two quadratic forms that differ by a Null-Lagrangian as equivalent, i.e., we consider quadratic forms modulo Null-Lagrangians. The so-called trivial extreme points of $S$ are the rank-one forms, i.e., squares of linear forms. These are clearly not extremals in the sense of Milton, nevertheless, if a ray that is not a rank-one form is an extreme point of the convex cone $S,$ then it is apparently an extremal in the sense of Definition~\ref{def:1}. However, it is not known if the converse implication holds, i.e., if any extremal in the sense of Milton is a non-trivial extreme ray of $S.$ This seems to be a highly nontrivial task. Also, recall that the Krein-Milman theorem asserts the following: \textit{Let $X$ be a locally convex topological vector space, and let $K$ be a compact convex subset of $X.$ Then $K$ is the closed convex hull of its extreme points.} The notion of extremality in the sense of Milton has a strong Krein-Milman property, namely Milton [\ref{bib:Mil2}, \ref{bib:Mil4}] proved that any quasiconvex quadratic form is a sum of exactly one extremal and a polyconvex form. In the algebraic language, extremality reads as follows: \textit{A nonnegative biquadratic form $f(x\otimes y)=\sum_{ijk\ell=1}^3x_iy_jC_{ijk\ell}x_ky_\ell$ is extremal, if it can not be written as a perfect square plus a nonnegative form.} In the case of degree four nonnegative homogeneous polynomials that depend in $3$ variables, a related problem was considered and solved by Hilbert, [\ref{bib:Hil}], where he shows, that such polynomials are necessarily sums of perfect squares, thus there is no extremals among them. In the case of nonnegative homogeneous polynomials of even degree that depend on $n>2$ variables,
Hilbert proved, that they are not necessarily sums of squares of polynomials, but sums of squares of rational functions. The problem of
sums of squares is well-known in algebraic geometry and a recent significant result was obtained in [\ref{bib:Ble.Smi.Vel.}].
Milton presented an algorithm of finding such extremal forms in [\ref{bib:Mil2}]. Milton also proved new sharp inequalities by using extremal forms in [\ref{bib:Mil2}]. The first non-trivial explicit example of an extremal quasiconvex
function is that given in [\ref{bib:Har.Mil}] and is the function (\ref{1.1}) (which is also extremal in the two stronger senses of extremality introduced in that paper). Subsequently, in [\ref{bib:Har.Mil1}] we found a surprising and unexpected link between extremal quasiconvex functions with orthotropic symmetry and extremal polynomials (namely those polynomials that take only non-negative values, and which lose this property when any other non-negative polynomial
is subtracted from them). It is clear that for the quadratic form $f(\xi)=(M\xi;\xi),$ with $\xi\in\mathbb R^{N\times n},$ the expression for
$f(x\otimes y)$ can be written as $xC(y)x^T,$ where $x\in \mathbb R^N,$ $y\in\mathbb R^n$ and $C(y)$ is an
$N\times N$ symmetric matrix with its entries being quadratic forms in $y.$ The matrix $C(y)$ is called the acoustic
tensor of $f$ or just the $y-$matrix of $f(x\otimes y)$. The authors proved in [\ref{bib:Har.Mil1}],
that an elasticity quasiconvex tensor (a quasiconvex quadratic form $f(\xi)$ that depends solely on the symmetric part of $\xi$)
with orthotropic symmetry is extremal if the determinant of the $y-$matrix
(acoustic tensor) is an extremal polynomial that is not a perfect square, see [\ref{bib:Har.Mil1}, Theorem 5.1].

The goal of this work is to characterize all $3\times 3$ extremals, while our final goal is to characterize all $m\times n$ extremals.
We distinguish four main cases that exhaust all possibilities:
\begin{itemize}
\item[(i)] The determinant of the acoustic matrix of $f(x\otimes y)$ is an extremal polynomial that is not a perfect square.
\item[(ii)] The determinant of the acoustic matrix of $f(x\otimes y)$ is equivalently zero.
\item[(iii)] The determinant of the acoustic matrix of $f(x\otimes y)$ is an extremal polynomial that is a perfect square.
\item[(iv)] The determinant of the acoustic matrix of $f(x\otimes y)$ is not an extremal polynomial.
\end{itemize}
In this paper we extend the result proven in [\ref{bib:Har.Mil1}] to any $3\times 3$ quadratic form.
These of course include forms that need not be orthotropic and which can depend not just
on the symmetric part of $\xi,$ i.e., the strain in the elasticity context, but also on its antisymmetric part. In this
paper we study cases (i), (ii) and (iii) completely and conjecture on the case (iv). We believe, that case (iv) provides no extremals.
Some of the ideas of the proof already appeared in our work in [\ref{bib:Har.Mil1}].
As we aim to keep the present paper proof-self-contained, we shall repeat some of the ideas and proofs from [\ref{bib:Har.Mil1}].
It is rather striking, that our proof is a mixture of analytic and algebraic tools.
The problem under consideration is of algebraic geometry nature but comes from applications in composite
 materials and metamaterials and at first sight looks to be a part of quasiconvex analysis.
Some motivations for studying extremals were discussed in section 2 of [\ref{bib:Har.Mil1}]. Briefly, in the theory of composites one powerful method for obtaining bounds on effective tensor has proven to be the
translation method introduced by Tartar and Murat [\ref{bib:Tar0},\ref{bib:Mur.Tar},\ref{bib:Tar2}] and Lurie and Cherkaev [\ref{bib:LurCherk1}, \ref{bib:LurCherk2}]. These bounds are tightest if one
uses extremal quasiconvex forms as shown in [\ref{bib:Mil3}, page 87], see also [\ref{bib:Mil1}, section 25.2].
Allaire and Kohn [\ref{bib:All.Kohn}]  used certain type of extremals to bound the elastic energy of
two phase composites with isotropic phases. Kang and Milton [\ref{bib:Kan.Mil}] extended the translation method as a tool for bounding the volume fractions of materials in a two-phase body from boundary
measurements and in this context too it is natural to use extremal  quasiconvex forms (although it is an open question as to whether it is always best to use  extremal  quasiconvex forms in this context).
Extremal quasiconvex forms are also the best choice of quasiconvex functions for obtaining series expansions for effective tensors that have an extended domain of convergence, and thus analyticity properties as a function of the component moduli on this domain [see section 14.8, and page 373 of section 18.2 of \ref{bib:Mil1}]. Let us mention in conclusion, that we believe the study of extremals might lead to construction of new rank-one convex functions (unlike any in the literature) that are not quasiconvex, which is part of our future work.

\section{Extremal polynomials and determinants of quasiconvex quadratic forms: The main results}
\label{sec:3}

In this section we recall the notions of \textit{extremality and equivalence} of homogeneous polynomials, e.g., [\ref{bib:Pra}] and formulate
our main results. We start with a brief introduction.
\begin{Definition}
\label{def:3.1}
Assume $m$ and $n$ are natural numbers and $P(x_1,x_2,\dots,x_n)$ is a degree $2m$ homogeneous polynomial. Then $P(x)$ is called an extremal polynomial, if $P(x)\geq 0$ for all $x\in\mathbb R^n$ and $P(x)$ cannot be written as a sum of two other non-negative polynomials that are linearly independent.
\end{Definition}

\begin{Definition}
\label{def:3.2}
Assume $m$ and $n$ are natural numbers and $P(x_1,x_2,\dots,x_n)$ and $Q(x_1,x_2,\dots,x_n)$ are degree $2m$ homogeneous polynomials. Then $P(x)$ and $Q(x)$ are equivalent if there exists a non-singular matrix $A\in\mathbb R^{n\times n}$ such that $P(x)=Q(Ax).$
\end{Definition}

It is then straightforward to prove that this notion of equivalence is actually an equivalence relation preserving also the extremality of polynomials.

\begin{Theorem}
\label{th:3.3}
The notion of equivalence introduced in Definition~\ref{def:3.2} has the following properties:
\begin{itemize}
\item $P$ is equivalent to itself
\item If $P$ is equivalent to $Q$ then $Q$ is equivalent to $P$
\item If $P$ is equivalent to $Q$ and $Q$ is equivalent to $R,$ then $P$ is equivalent to $R$
\item If $P$ is equivalent to $Q$ and $Q$ is an extremal then $P$ is an extremal too
\end{itemize}
\end{Theorem}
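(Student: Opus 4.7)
The statement has four parts, each following from straightforward manipulation of the defining identity $P(x)=Q(Ax)$, so the plan is to verify them in order and then treat preservation of extremality with slightly more care.

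For reflexivity, I take $A = I$, which is nonsingular, and observe that $P(x) = P(Ix)$. For symmetry, if $P(x) = Q(Ax)$ with $A$ nonsingular, I substitute $y = Ax$ so that $x = A^{-1}y$, and read off $Q(y) = P(A^{-1}y)$; since $A^{-1}$ is nonsingular, $Q$ is equivalent to $P$. For transitivity, if $P(x) = Q(Ax)$ and $Q(x) = R(Bx)$, then substituting gives $P(x) = R(B(Ax)) = R((BA)x)$, and $BA$ is nonsingular as a product of nonsingular matrices.

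The fourth claim, preservation of extremality, is the only one that uses Definition~\ref{def:3.1} rather than just linear algebra. Suppose $P(x) = Q(Ax)$ with $A$ nonsingular and $Q$ extremal. Then $P(x) = Q(Ax) \geq 0$ for all $x$, so $P$ is nonnegative. Assume for contradiction that $P = P_1 + P_2$ with $P_1, P_2$ nonnegative homogeneous polynomials of degree $2m$ that are linearly independent. Define
\begin{equation}
Q_i(y) := P_i(A^{-1}y), \qquad i=1,2.
\end{equation}
Each $Q_i$ is a homogeneous polynomial of degree $2m$ (composition with the invertible linear map $A^{-1}$ preserves both the polynomial structure and the degree), and $Q_i(y) \geq 0$ for all $y$. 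Moreover, for any $y$,
\begin{equation}
Q_1(y) + Q_2(y) = P_1(A^{-1}y) + P_2(A^{-1}y) = P(A^{-1}y) = Q(A A^{-1} y) = Q(y).
\end{equation}
Extremality of $Q$ then forces $Q_1$ and $Q_2$ to be linearly dependent, i.e., there exist constants $\alpha, \beta$, not both zero, with $\alpha Q_1 + \beta Q_2 \equiv 0$. Replacing $y$ by $Ax$ gives $\alpha P_1(x) + \beta P_2(x) \equiv 0$, contradicting the assumed linear independence of $P_1, P_2$. Hence $P$ is extremal.

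There is essentially no obstacle here: the whole argument rests on the fact that the pullback by a nonsingular linear map is an involutive automorphism of the cone of nonnegative homogeneous polynomials of a given degree, and this automorphism commutes with addition and preserves linear (in)dependence. The only mild point to note is that one must check $Q_i$ really is a polynomial of the correct degree (immediate from $A^{-1}$ being linear) and that the decomposition $Q = Q_1 + Q_2$ is into nonnegative summands (immediate from $P_i \geq 0$ and invertibility of $A$); once those are observed, extremality of $Q$ transfers to $P$ by pulling the forbidden decomposition of $P$ forward through $A^{-1}$ and invoking the hypothesis on $Q$.
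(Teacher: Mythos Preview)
Your proof is correct and is exactly the straightforward verification the paper has in mind; the paper does not supply a proof at all, stating only that ``it is then straightforward to prove that this notion of equivalence is actually an equivalence relation preserving also the extremality of polynomials.'' Your argument fills in precisely those routine details.
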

We posed the question of characterizing all $3\times3 $ extremal quasiconvex quadratic forms in [\ref{bib:Har.Mil1}]
and provided a sufficient condition for elasticity tensors with orthotropic symmetry.
Our choice was motivated by some examples that are listed below:\\

\textbf{Example 1.} The form $f(\xi)=\xi_{11}^2+\xi_{22}^2+\xi_{33}^2$ has a determinant of its $y$-matrix equal to
$y_1^2y_2^2y_3^2$ which is an extremal polynomial, but $f(\xi)$ is obviously not an extremal.\\

\textbf{Example 2.} The form $f(\xi)=\xi_{11}^2+\xi_{22}^2$ has a determinant of its $y$-matrix equal to
$0,$ which is an extremal polynomial, but $f(\xi)$ is obviously not an extremal.\\

\textbf{Example 3.} The determinant of the $y$-matrix of any rank-one form is equivalently zero,
but a rank-one form is not an extremal.\\

\textbf{Example 4.} The most interesting and motivating example is the form in (\ref{1.1}) that appeared in [\ref{bib:Har.Mil}]:
 $$Q(\xi)=\xi_{11}^2+\xi_{22}^2+\xi_{33}^2-2(\xi_{11}\xi_{22}+\xi_{11}\xi_{33}+\xi_{22}\xi_{33})+\xi_{12}^2+\xi_{23}^2+\xi_{31}^2.$$
It turns out that the polynomial
$$P(y)=y_1^4y_2^2+y_2^4y_3^2+y_3^4y_1^2-3y_1^2y_2^2y_3^2$$
that is the determinant of the $y$-matrix of $Q(\xi),$ is an extremal polynomial that is not a perfect square, which is proven in
[\ref{bib:Har.Mil1}]. The form $Q(\xi)$ has been proven to be an extremal in even a
stronger sense than in Definition~\ref{def:1}, e.g., [\ref{bib:Har.Mil}, Theorem~1.5].
The following theorem is one of the main results of the paper.

\begin{Theorem}
\label{th:3.4}
Let $d\geq 3$ be a whole number and let the quadratic form $f(\xi)=\xi C\xi^T$ be quasiconvex,
where $\xi\in \mathbb R^{d\times d}$ and $C\in(\mathbb R^d)^4$ is a fourth order tensor.
Assume furthermore that the determinant of the $y-$matrix of $f(x,y)$ is an irreducible extremal polynomial.
Then $f$ is an extremal form.
\end{Theorem}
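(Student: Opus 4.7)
The plan is to argue by contradiction. Suppose $f$ is not extremal; then $f = \ell^2 + h$ with $\ell(x\otimes y) = x^T A y$ a nonzero rank-one form and $h$ quasiconvex. In acoustic-tensor language this reads $C(y) = C_h(y) + v v^T$ with $v = A y$ and $C_h(y) \succeq 0$ for every $y$, and the goal is to derive $A = 0$, contradicting the nontriviality of $\ell$.

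The first step is to exploit the extremality of $\det C(y)$. The matrix determinant lemma gives
$$\det C(y) \;=\; \det C_h(y) \;+\; v^T \operatorname{adj}(C_h(y))\,v,$$
and both summands are non-negative polynomials in $y$: the first because $C_h \succeq 0$; the second because the adjugate of a positive semidefinite matrix is itself positive semidefinite. Extremality of $\det C$ forces this decomposition to be trivial, so there exists $\alpha \in [0,1]$ with
$$\det C_h = \alpha\det C,\qquad v^T\operatorname{adj}(C_h)\,v = (1-\alpha)\det C.$$

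The heart of the argument is a second polynomial identity. Starting from $C \operatorname{adj}(C) = \det C \cdot I$, substituting $C = C_h + v v^T$, pre-multiplying by $\operatorname{adj}(C_h)$, and using $\det C_h = \alpha \det C$ together with the symmetry of $\operatorname{adj}(C)$, one obtains the polynomial identity
$$\det C(y)\,\bigl(\operatorname{adj}(C_h) - \alpha\operatorname{adj}(C)\bigr) \;=\; b\,a^T,\qquad a := \operatorname{adj}(C)\,v,\ b := \operatorname{adj}(C_h)\,v.$$
Since the left side is symmetric, $b a^T = a b^T$, i.e.\ $a_i b_j = a_j b_i$ for all $i,j$ as polynomial identities, so the vectors $a,b$ are pointwise parallel. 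The diagonal entries read $\det C \mid a_i b_i$; since $\det C$ is irreducible of degree $2d$ while $\deg a_i,\deg b_i \leq 2d - 1$, irreducibility forces either $a_i \equiv 0$ or $b_i \equiv 0$ for each index $i$. Combined with the parallelism relation this upgrades to the polynomial-vector dichotomy $a \equiv 0$ or $b \equiv 0$.

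Each branch forces $A = 0$. If $a \equiv 0$, then $\operatorname{adj}(C)\,A y \equiv 0$; since $\det C \not\equiv 0$, the matrix $\operatorname{adj}(C)$ is generically invertible, so $A y \equiv 0$ and $A = 0$. If $b \equiv 0$, contracting with $v$ gives $(1-\alpha)\det C \equiv 0$, forcing $\alpha = 1$ by irreducibility; then $\det C_h = \det C \not\equiv 0$ makes $\operatorname{adj}(C_h)$ generically invertible, and $\operatorname{adj}(C_h)Ay = b \equiv 0$ again yields $A = 0$. This contradicts $\ell \neq 0$, so $f$ is extremal. The step I anticipate as the main obstacle is deriving the key adjugate identity purely at the polynomial level: Sherman--Morrison would give it quickly but requires invertibility of $C$ or $C_h$, whereas the information we need lives precisely on the singular hypersurface $\{\det C = 0\}$; the second subtle point is the degree-and-irreducibility bookkeeping that converts $\det C \mid a_i b_i$ into entrywise vanishing.
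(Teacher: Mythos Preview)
Your argument is correct, and it takes a genuinely different route from the paper.

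The paper argues as follows. It introduces the one-parameter family $f_t=f-t\ell^2$, $t\in[0,1]$, and uses the Brunn--Minkowski determinant inequality to get $\det T(y)\ge \det T_t(y)\ge 0$. The explicit formula $\det T_t=\det T - t\,s^T T_{\mathrm{cof}}\,s$ (with $s=By$) together with extremality of $\det T$ then forces $s^T T_{\mathrm{cof}}\,s=\alpha\det T$ for a constant $\alpha\ge 0$. In the case $\alpha=0$ the positive semidefiniteness of $T_{\mathrm{cof}}$ gives $T_{\mathrm{cof}}\,s\equiv 0$, whence $\det T_{\mathrm{cof}}\equiv 0$ and $\det T\equiv 0$, a contradiction. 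In the case $\alpha>0$ the paper sets $g=f-\alpha^{-1}\ell^2$, so that $\det G\equiv 0$; then $G_{\mathrm{cof}}$ has rank $\le 1$ everywhere, and a separate combinatorial lemma (the paper's Lemma~4.2) factors its entries as $a_i(y)b_j(y)$. Substituting back yields $\det T = k\bigl(\sum_i a_i s_i\bigr)\bigl(\sum_j b_j s_j\bigr)$, a nontrivial factorisation contradicting irreducibility.

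Your proof bypasses both Brunn--Minkowski and the rank-one factorisation lemma. You apply the matrix determinant lemma once to $C=C_h+vv^T$, invoke extremality to get $\det C_h=\alpha\det C$, and then derive the clean polynomial identity $\det C\cdot(\operatorname{adj} C_h-\alpha\operatorname{adj} C)=b\,a^T$ by pre-multiplying $C\operatorname{adj} C=\det C\cdot I$ by $\operatorname{adj} C_h$. Irreducibility is then used as a \emph{primality} statement (via $\det C\mid a_ib_i$ with $\deg a_i,\deg b_i\le 2d-1<2d$) rather than as a non-factorisability statement, and the parallelism $a_ib_j=a_jb_i$ upgrades the entrywise conclusion to $a\equiv 0$ or $b\equiv 0$. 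Each branch kills $A$ via generic invertibility of the relevant adjugate. This is more algebraic and self-contained; the paper's approach, on the other hand, produces along the way the auxiliary form $g$ with $\det G\equiv 0$, a structure it reuses heavily in the subsequent Theorems~3.5--3.7.
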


It turns out, that the irreducibility of the determinant can be replaced by a weaker condition in the case $d=3,$ namely the following
is true: \textit{Any reducible extremal polynomial of degree 6 in 3 variables is a perfect square.} However, we will not prove that statement
and will pursue a different way (that will also be useful for the subsequent observations) to prove the theorem in the case $d=3.$
\begin{Theorem}
\label{th:3.5}
Let the quadratic form $f(\xi)=\xi C\xi^T$ be quasiconvex,
where $\xi\in \mathbb R^{3\times 3}$ and $C\in(\mathbb R^3)^4$ is a fourth order tensor.
Assume furthermore that the determinant of the $y-$matrix of $f(x,y)$ is an extremal polynomial that is not a perfect square.
Then $f$ is an extremal form.
\end{Theorem}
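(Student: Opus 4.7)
The plan is to argue by contradiction: assume $f$ is not extremal, so that $f=g+\ell^2$ for some quasiconvex quadratic form $g$ and nonzero linear form $\ell$ on $\mathbb{R}^{3\times 3}$. Writing $\ell(x\otimes y)=x^Tv(y)$ with $v(y)$ linear in $y$, the acoustic matrices satisfy $C_f(y)=C_g(y)+v(y)v(y)^T$, both positive semidefinite pointwise, and the matrix determinant lemma yields
$$
\det C_f(y) \;=\; \det C_g(y) \;+\; v(y)^T\operatorname{adj}(C_g(y))\,v(y).
$$
Both summands are non-negative polynomials in $y$ (the adjugate of a PSD $3\times3$ matrix is PSD). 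Extremality of $\det C_f$ forces these two summands to be proportional to $\det C_f$: there is $t\in[0,1]$ with $\det C_g=t\det C_f$ and $v^T\operatorname{adj}(C_g)v=(1-t)\det C_f$. The endpoint $t=1$ is excluded, as it would force $v\equiv 0$ (since $\operatorname{adj}(C_g)$ is generically positive definite when $\det C_g\not\equiv 0$), contradicting $\ell\neq 0$.

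I would then reduce to the case $t=0$ by sliding along the one-parameter family $g_s:=f-s\ell^2$ with $s\in[0,s^\ast]$, $s^\ast:=1/(1-t)$. Using the invariance $v^T\operatorname{adj}(A+vv^T)v=v^T\operatorname{adj}(A)v$ (differentiate the matrix determinant lemma in the rank-one direction), one obtains $v(y)^TC_f(y)^{-1}v(y)=1-t$ on the dense open set where $C_f$ is invertible. Hence $C_{g_s}=C_f-svv^T$ remains positive semidefinite precisely up to $s=s^\ast$, at which point $\det C_{g_{s^\ast}}\equiv 0$ everywhere. Replacing $g$ by $g_{s^\ast}$ and $\ell$ by $\sqrt{s^\ast}\,\ell$, one may assume from the start that $\det C_g\equiv 0$, so that $\det C_f=v^T\operatorname{adj}(C_g)v$.

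With $\det C_g\equiv 0$ the matrix $C_g(y)$ has rank at most $2$ for every $y$, whence $\operatorname{adj}(C_g(y))$ is pointwise PSD of rank at most $1$, and its polynomial entries $m_{ij}$ satisfy $m_{ij}^2=m_{ii}m_{jj}$ identically. Using unique factorization in $\mathbb{R}[y_1,y_2,y_3]$ I would show that all diagonal entries $m_{ii}$ share a common square-free non-negative polynomial factor $\rho$, write $m_{ii}=\rho s_i^2$ with polynomial $s_i$, and coherently fix signs so that $m_{ij}=\rho s_is_j$, producing
$$
\operatorname{adj}(C_g) \;=\; \rho\cdot s s^T, \qquad \det C_f(y) \;=\; \rho(y)\,\bigl(v(y)\cdot s(y)\bigr)^2.
$$
A case split on $\deg\rho\in\{0,2,4\}$ (since the entries of $\operatorname{adj}(C_g)$ are homogeneous of degree $4$) then finishes the argument. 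If $\deg\rho=0$, then $\rho$ is a positive constant and $\det C_f$ is a perfect square, contrary to hypothesis. If $\deg\rho=2$, square-freeness forces $\rho$ to be a PSD quadratic form of rank at least $2$, hence a sum of at least two linearly independent squares of linear forms; multiplying by $(v\cdot s)^2$ exhibits $\det C_f$ as a sum of two linearly independent non-negatives (assuming $v\cdot s\not\equiv 0$; otherwise $\det C_f\equiv 0$, again a perfect square), violating extremality. If $\deg\rho=4$, then $\rho$ itself is extremal (inherited from $\det C_f$ via the factor $(v\cdot s)^2$) and is a PSD ternary quartic; by Hilbert's sum-of-squares theorem $\rho$ is a sum of squares of quadratic polynomials, and extremality collapses this to a single square, contradicting square-freeness.

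The main obstacle is the factorization step $\operatorname{adj}(C_g)=\rho\cdot ss^T$: one must lift the pointwise rank-one PSD structure to a global polynomial identity by carefully coordinating the square-free parts of the three diagonal entries and choosing compatible signs on the off-diagonals, using only that $\mathbb{R}[y_1,y_2,y_3]$ is a UFD. Everything else is either routine (matrix determinant lemma, Schur complement for the $C_{g_s}$ monotonicity) or classical (Hilbert's theorem on PSD ternary quartics); the delicate point is the interplay between unique factorization, non-negativity, and the rank constraint needed to extract the pair $(\rho,s)$.
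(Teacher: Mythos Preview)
Your proposal is correct and follows essentially the same architecture as the paper's proof: both reduce, via extremality of $\det C_f$, to the situation $\det C_g\equiv 0$ for a suitable $g=f-s^\ast\ell^2$, then factor the rank-one adjugate as $\operatorname{adj}(C_g)=\rho\,ss^T$ using unique factorization (this is exactly the content of the paper's Lemma~4.3, your three cases $\deg\rho\in\{0,2,4\}$ matching the three matrix templates there), and conclude via Hilbert's theorem on PSD ternary quartics. Your Schur-complement sliding argument establishing that $C_{g_{s^\ast}}$ is itself positive semidefinite---hence so is its adjugate---is a mild streamlining of the paper, which instead reaches nonnegativity of the diagonal of $G_{\mathrm{cof}}$ through the explicit computation (6.1)--(6.4); otherwise the two proofs coincide.
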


The next theorem gives a complete characterization for case (ii).
\begin{Theorem}
\label{th:3.6}
Let the quadratic form $f(\xi)=\xi C\xi^T$ be quasiconvex,
where $\xi\in \mathbb R^{3\times 3}$ and $C\in(\mathbb R^3)^4$ is a fourth order tensor.
Assume furthermore that the determinant of the $y-$matrix of $f(x,y)$ is identically zero.
Then $f$ is either an extremal or a polyconvex form.
\end{Theorem}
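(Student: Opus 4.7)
The plan is to analyse the structure of the acoustic tensor $C(y)$, which is positive semidefinite everywhere by quasiconvexity and has rank at most $2$ by the hypothesis $\det C(y)\equiv 0$. I will split into three disjoint cases according to the structure of the nullspace $\ker C(y)$; the first two yield polyconvexity directly, and the third leads, after a Milton decomposition, to the dichotomy stated in the theorem.

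First, if there is a common nullvector $x_0\in\mathbb R^3\setminus\{0\}$ with $C(y)x_0=0$ for every $y$, then in a basis where $x_0=e_3$ the third row and column of $C(y)$ vanish identically, so $f(\xi)$ involves only the first two rows of $\xi$ and is an effective $2\times 3$ quasiconvex quadratic form; the classical Terpstra--Serre theorem then gives $f$ polyconvex. Second, if there is no common nullvector but $\operatorname{rank} C(y)\leq 1$ everywhere, then pointwise $C(y)=v(y)v(y)^T$, and the vanishing $2\times 2$ minor identities $C_{ii}C_{jj}=C_{ij}^2$ combined with unique factorization in $\mathbb R[y_1,y_2,y_3]$ force $v(y)=Vy^T$ to be linear (any non-polynomial square-root factor would have to be shared by all nonzero $v_j$, collapsing the image to a line and producing a common nullvector). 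Consequently $f(x\otimes y)=(xVy^T)^2$ on rank-one directions, and since two quadratic forms agreeing on all rank-ones differ only by a null-Lagrangian, $f=g^2+N$ is polyconvex.

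The remaining case is $\operatorname{rank} C(y)=2$ generically with no common nullvector, so $\ker C(y)=\operatorname{span}(n(y))$ with $n(y)$ genuinely varying in direction; $n(y)$ may be chosen as any nonzero column of $\operatorname{adj}(C(y))$, a polynomial of degree $4$. I invoke Milton's structural decomposition $f=E+Q+N$ with $E$ extremal, $Q$ a PSD quadratic form, and $N$ a null-Lagrangian; the theorem's conclusion reduces to $E=0$ (then $f$ is polyconvex) or $Q=0$ (then $f$ is extremal modulo a null-Lagrangian). Assume for contradiction both are nonzero, and write $Q=\sum_{i=1}^r\ell_i^2$ with $\ell_i(\xi)=xA^{(i)}y^T$ and $a_i(y)=A^{(i)}y^T$, so that $C=C_E+\sum_i a_ia_i^T$ with every summand PSD. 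Since $\det C\equiv 0$, the generic kernel direction $n(y)$ of $C$ must lie in $\ker C_E(y)$ and in each $\ker(a_ia_i^T)$, yielding the polynomial identities $a_i(y)\cdot n(y)\equiv 0$ for every $i$.

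The main obstacle, and the step I expect to require the most care, is promoting these orthogonality relations into a genuine PSD dominance $C_E\succeq\varepsilon^2 a_ia_i^T$ for some $\varepsilon>0$, which would produce a rank-one form subtractable from $E$ and contradict its extremality. The condition $a_i\perp n$ already places $a_i$ in the range of $C_E$, but the magnitude comparison requires bounding the scale-invariant quantity $a_i(y)^T C_E(y)^+a_i(y)$ uniformly on the $y$-projective sphere, and apparent blow-ups on the loci where $C_E$ drops to rank $\leq 1$ must be handled. I expect to resolve this by exploiting the polynomial structure: on such rank-drop loci $\operatorname{adj}(C_E)$ and hence $n(y)$ vanishes, and the polynomial identity $a_i\cdot n\equiv 0$ together with the no-common-nullvector hypothesis (which prevents $n(y)$ from degenerating to a fixed direction) compels $a_i$ to vanish to sufficient order there to recover uniform PSD dominance, closing the contradiction and completing the proof.
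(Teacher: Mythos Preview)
Your first two cases are fine. The real gap is in Case~3, at precisely the step you flag as the main obstacle.

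The assertion that ``$a_i\perp n$ already places $a_i$ in the range of $C_E$'' is incorrect: $n(y)$ spans $\ker C(y)$, so $a_i\perp n$ only gives $a_i(y)\in\operatorname{range}C(y)$. Generically $\operatorname{range}C=\operatorname{range}C_E$ (both rank~$2$, and $\ker C\subset\ker C_E$), but at a point $y_0$ where $C_E$ drops to rank~$1$ while $C$ retains rank~$2$, $\operatorname{range}C_E(y_0)$ is a \emph{line} inside the plane $n(y_0)^\perp=\operatorname{range}C(y_0)$, and nothing forces $a_i(y_0)$ onto that line; the dominance $C_E(y_0)\succeq\varepsilon^2 a_i(y_0)a_i(y_0)^T$ then fails for every $\varepsilon>0$. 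Your proposed repair conflates $\operatorname{adj}(C)$ with $\operatorname{adj}(C_E)$: if you switch to a column $n_E$ of $\operatorname{adj}(C_E)$ you do still get $a_i\cdot n_E\equiv 0$ (by parallelism with $n$ on the generic set), but at rank-drop points of $C_E$ one has $n_E(y_0)=0$, so the identity degenerates to $0=0$ and imposes \emph{no} vanishing on $a_i(y_0)$ at all. The ``sufficient-order vanishing'' mechanism therefore does not fire, and the uniform bound on $a_i^{T}C_E^{+}a_i$ remains unestablished.

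The paper's route is quite different and avoids Milton's decomposition. After classifying $T_{\mathrm{cof}}(y)$ into three explicit algebraic shapes (Lemma~4.3), the decisive tool for the hardest shape is Lemma~4.5: assuming $f$ is not extremal, the diagonal cofactors of $f_h=f-h\bigl(\sum_i x_is_i(y)\bigr)^2$ remain perfect squares for a whole interval of $h$, which forces the subtracted pieces to be scalar multiples of the original squares and yields rigid identities of the form $(t_{ii}-s_i^2/\gamma^2)(t_{jj}-s_j^2/\gamma^2)=(t_{ij}-s_is_j/\gamma^2)^2$. A short case analysis then annihilates a diagonal cofactor and reduces to the polyconvex Case~1. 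This one-parameter perfect-square rigidity is the idea your argument is missing; a pseudoinverse/compactness estimate of the kind you sketch does not control the degeneration locus of $C_E$.
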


The next theorem gives a complete characterization for case (iii).
\begin{Theorem}
\label{th:3.7}
Let the quadratic form $f(\xi)=\xi C\xi^T$ be quasiconvex,
where $\xi\in \mathbb R^{3\times 3}$ and $C\in(\mathbb R^3)^4$ is a fourth order tensor.
Assume furthermore that the determinant of the $y-$matrix of $f(x,y)$ is a perfect square.
Then $f$ is either extremal, polyconvex or a sum of a rank-one and extremal forms, where the
extremal has an identically zero determinant of the acoustic tensor.
\end{Theorem}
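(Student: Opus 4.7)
The plan is to split on whether the cubic polynomial $P(y)$ defined by $\det C(y) = P(y)^2$ is identically zero or not, and in the non-trivial case to produce a rank-one piece whose subtraction reduces the problem to Theorem~\ref{th:3.6}. If $P \equiv 0$, then $\det C(y) \equiv 0$ and Theorem~\ref{th:3.6} immediately gives that $f$ is extremal or polyconvex, as desired. Henceforth I assume $P \not\equiv 0$, and since there is nothing to prove when $f$ is extremal I also assume $f$ is not extremal, so there exists a non-trivial rank-one form $\ell^2$, with $\ell(x,y) = x^{T}A y$ and $A \neq 0$, for which $f - \ell^2$ is still quasiconvex.

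Writing $v(y) := Ay$, the matrix-determinant lemma yields
\begin{equation*}
\det\!\bigl(C(y) - v(y)v(y)^{T}\bigr) = P(y)^2 - Q(y), \qquad Q(y) := v(y)^{T}\, \mathrm{adj}(C(y))\, v(y).
\end{equation*}
The adjugate of a $3\times 3$ PSD matrix is itself PSD (its eigenvalues are the pairwise products of the eigenvalues of $C(y)$), so $Q(y) \geq 0$, while $P(y)^2 - Q(y) \geq 0$ as the determinant of the PSD matrix $C(y) - v(y)v(y)^{T}$. This exhibits $P^2 = Q + (P^2 - Q)$ as a decomposition of the extremal polynomial $P^2$ into two non-negative summands, forcing them to be linearly dependent; hence $Q = \alpha P^2$ for some $\alpha \in [0,1]$. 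The case $\alpha = 0$ is excluded, since it would give $\mathrm{adj}(C(y))\, v(y) \equiv 0$, and invertibility of $\mathrm{adj}(C(y))$ on the non-empty open set $\{P \neq 0\}$ would then force $v(y) = Ay \equiv 0$, contradicting $A \neq 0$. Therefore $\alpha \in (0,1]$.

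Set $\tilde\ell := \ell/\sqrt{\alpha}$ and consider $f - \tilde\ell^2$, whose acoustic tensor is $C(y) - (1/\alpha)\, v(y) v(y)^{T}$. On the open set $\{P \neq 0\}$, where $C(y)$ is positive definite, the identity $v^{T}C^{-1}v = Q/\det C = \alpha$ (combined with a Cauchy--Schwarz inequality in the $C$-inner product, equivalent to the standard condition for a rank-one PSD perturbation) yields $C(y) - (1/\alpha)\, v(y) v(y)^{T} \succeq 0$; closedness of the PSD cone extends this inequality to all $y$, and by the matrix-determinant lemma the determinant of this new tensor is $P^2 - (1/\alpha)Q \equiv 0$. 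Applying Theorem~\ref{th:3.6} to $f - \tilde\ell^2$ now yields two alternatives: if $f - \tilde\ell^2$ is extremal, then $f = \tilde\ell^2 + (f - \tilde\ell^2)$ is the required rank-one-plus-extremal-with-vanishing-determinant decomposition; if instead $f - \tilde\ell^2 = h + N$ is polyconvex, with $h$ a convex quadratic form and $N$ a null-Lagrangian, then $f = (\tilde\ell^2 + h) + N$ is a sum of a convex form and a null-Lagrangian, hence polyconvex.

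The main technical obstacle is verifying that the PSD inequality $C(y) \succeq (1/\alpha)\, v(y) v(y)^{T}$ genuinely survives at the zero locus $\{P = 0\}$ where $C(y)$ is singular; the cleanest route is to establish it first on the dense open set $\{P \neq 0\}$ via the identity $v^{T}C^{-1}v = \alpha$ and then pass to the closure using closedness of the PSD cone. A subsidiary but essential point is that it is the \emph{extremality} of $P^2$, not merely its being a perfect square, that forces the linear dependence of $Q$ and $P^2 - Q$, so the case~(iii) hypothesis enters at precisely that single step.
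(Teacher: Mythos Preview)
Your proof is correct and follows the same opening move as the paper: use extremality of $P^2$ to force $Q=\alpha P^2$, rule out $\alpha=0$, and subtract $\tfrac{1}{\alpha}\ell^2$ so that the resulting acoustic tensor $G(y)=C(y)-\tfrac{1}{\alpha}v(y)v(y)^T$ has identically vanishing determinant. Where you diverge from the paper is in the verification that $G(y)\succeq 0$. The paper does \emph{not} establish this directly; instead it only knows that $\det G\equiv 0$ and that the diagonal cofactors of $G$ are nonnegative, and then splits into three cases according to whether the diagonal entries $g_{ii}$ of $G$ are positive semidefinite, indefinite, or negative semidefinite as quadratic forms in $y$. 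Cases~2 and~3 occupy most of the paper's argument and involve Marcellini's theorem and several ad hoc divisibility and sign analyses to conclude polyconvexity of $f$.

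Your route bypasses all of that: on the dense open set $\{P\neq 0\}$ the matrix $C(y)$ is positive definite, the identity $v^TC^{-1}v=Q/\det C=\alpha$ holds, and the Cauchy--Schwarz inequality in the $C$-inner product gives $(v^Tx)^2\leq \alpha\,x^TCx$, i.e.\ $G(y)\succeq 0$; continuity and closedness of the PSD cone then push this to all $y$. This is entirely rigorous, and it actually shows that the paper's Cases~2 and~3 are vacuous. What you gain is a one-paragraph reduction to Theorem~\ref{th:3.6} in place of roughly two pages of casework; what the paper's longer argument buys is perhaps only a more hands-on view of the structure of $G$, which is not needed for the conclusion. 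Your observation in the final paragraph that extremality of $P^2$ (not merely the perfect-square hypothesis) is the crucial input is also correct and matches the paper's implicit reliance on the Section~\ref{sec:5} analysis.
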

And finally, we conjecture, that case (iv) provides no extremals:
\begin{Conjecture}
\label{th:3.8}
Let the quadratic form $f(\xi)=\xi C\xi^T$ be quasiconvex,
where $\xi\in \mathbb R^{3\times 3}$ and $C\in(\mathbb R^3)^4$ is a fourth order tensor.
Assume furthermore that the determinant of the $y-$matrix of $f(x,y)$ is not an extremal polynomial.
Then $f$ is not an extremal either.
\end{Conjecture}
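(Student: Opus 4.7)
The plan is to prove the contrapositive: assume $f$ is extremal in the sense of Definition~\ref{def:1}, and deduce that $\det C(y)$ must be extremal as a polynomial. The bridge is the matrix determinant lemma. Suppose $f$ were not extremal, so that $g=f-\ell^{2}$ is quasiconvex for some nonzero linear form $\ell(\xi)=x^{T}Ay$. Then at the level of acoustic tensors $C(y)=C_{g}(y)+(Ay)(Ay)^{T}$, and since the adjugate of a $3\times 3$ positive semidefinite matrix is itself positive semidefinite, the lemma yields
\begin{equation*}
\det C(y)=\det C_{g}(y)+(Ay)^{T}\mathrm{adj}(C_{g}(y))(Ay),
\end{equation*}
exhibiting $\det C(y)$ as a sum of two non-negative polynomials. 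A first lemma would show that these two summands can be taken linearly independent after possibly normalizing, since linear dependence would force either $\det C_{g}\equiv 0$ or $\mathrm{adj}(C_{g}(y))(Ay)\equiv 0$, each of which can be handled separately via cases (ii) and (iii). This already establishes the easy direction: \emph{if $f$ is not extremal, then $\det C(y)$ is not extremal}, and the main content of the conjecture is the converse.

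The heart of the argument is then the \emph{lifting step}: given a non-trivial decomposition $\det C(y)=P_{1}(y)+P_{2}(y)$ with $P_{1},P_{2}\geq 0$ linearly independent, produce a matrix $A\in\mathbb{R}^{3\times 3}$ for which $C(y)-(Ay)(Ay)^{T}\succeq 0$ pointwise in $y$. Where $\det C(y)>0$, this semidefiniteness is equivalent, via $C(y)^{-1}=\mathrm{adj}(C(y))/\det C(y)$, to the scalar polynomial inequality
\begin{equation*}
(Ay)^{T}\mathrm{adj}(C(y))(Ay)\leq\det C(y),\qquad y\in\mathbb{R}^{3},
\end{equation*}
which extends to the closure by continuity. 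Thus Conjecture~\ref{th:3.8} reduces to an algebraic question about the family $\{(Ay)^{T}\mathrm{adj}(C(y))(Ay):A\in\mathbb{R}^{3\times 3}\}$ of non-negative ternary sextics: whenever $\det C(y)$ is not extremal, must this family contain a nonzero element majorized by $\det C(y)$?

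The principal obstacle is precisely this lifting from a scalar polynomial identity to a matrix-compatible rank-one subtraction, because the family above is rigidly tied to the structure of $\mathrm{adj}(C(y))$ and is not a priori known to be cofinal in the cone of polynomials bounded above by $\det C(y)$. My plan would be to exploit the already-proved cases to narrow the possibilities: under the hypothesis of Conjecture~\ref{th:3.8}, $\det C(y)$ is neither zero, an irreducible extremal, nor an extremal perfect square, so either it factors non-trivially over $\mathbb{R}$ or else its failure of extremality takes a specific structured form. In the reducible case, a proper factor of $\det C(y)$ vanishes on a real subvariety along which $C(y)$ has a nontrivial kernel, and I would try to build $A$ by tracking these kernel directions, using the Kronecker canonical form of $C(y)$ viewed as a matrix pencil polynomial in $y$, together with Hilbert's theorem on ternary quartics to control the residual non-negativity of $\det C(y)-(Ay)^{T}\mathrm{adj}(C(y))(Ay)$. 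The hardest step I anticipate is this explicit construction of $A$ from the factorization and kernel data, as the passage from a polynomial-level decomposition to a matrix-level one is delicate, and it is precisely this passage that makes case (iv) genuinely more subtle than cases (i)--(iii).
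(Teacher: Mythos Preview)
This statement is Conjecture~\ref{th:3.8} in the paper, and the paper does \emph{not} prove it: it is explicitly presented as an open conjecture, with the remark ``We believe, that case (iv) provides no extremals.'' So there is no proof in the paper to compare against, and your proposal should be read as a strategy toward an open problem rather than an alternative proof.

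On the strategy itself: your opening paragraph is logically tangled. You announce you will prove the contrapositive ($f$ extremal $\Rightarrow$ $\det C(y)$ extremal), but then immediately argue the \emph{converse} ($f$ not extremal $\Rightarrow$ $\det C(y)$ not extremal) and call it the ``easy direction.'' That converse is not part of the conjecture at all; in fact it is essentially what Theorems~\ref{th:3.5}--\ref{th:3.7} already address. Moreover, your handling of the degenerate cases is off: when $\det C_g\equiv 0$ you appeal to ``cases (ii) and (iii),'' but those theorems classify forms whose \emph{own} acoustic determinant is zero or a perfect square, whereas here it is $g$, not $f$, that has vanishing determinant---so you cannot directly invoke them to conclude anything about extremality of $\det C(y)$.

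The genuinely relevant part of your proposal is the lifting step, and your reduction there is correct: for $y$ with $\det C(y)>0$, the condition $C(y)-(Ay)(Ay)^T\succeq 0$ is equivalent (via Schur complement or the matrix determinant lemma) to $(Ay)^T\mathrm{adj}(C(y))(Ay)\leq \det C(y)$, and this extends by continuity. So the conjecture does reduce to: whenever $\det C(y)$ admits a nontrivial nonnegative splitting, the nine-parameter family of sextics $\{(Ay)^T\mathrm{adj}(C(y))(Ay):A\in\mathbb R^{3\times 3}\}$ contains a nonzero element dominated by $\det C(y)$. But you then candidly acknowledge that you do not know how to carry out this construction, and the suggested tools (Kronecker canonical form for a matrix that is quadratic rather than linear in $y$, ``tracking kernel directions,'' Hilbert's theorem on quartics) are gestures rather than a plan. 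In particular, a generic nonnegative splitting $\det C(y)=P_1+P_2$ gives no obvious handle on $\mathrm{adj}(C(y))$, which is what must be matched. So what you have is a correct reformulation of an open problem together with an honest admission that the hard step remains; it is not a proof, and the paper does not claim one either.
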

We emphasize, that this connection between extremal quasiconvex quadratic forms and extremal polynomials came out of the blue, after noticing certain connections. There may be a very deep algebraic geometric reason for this connection, but this eludes us at the present time. In conclusion we make the following observation: We will be considering $3\times 3$ quasiconvex quadratic forms. The case when the determinant of the acoustic tensor of the quadratic form is an extremal polynomial is clearly covered by Theorems~\ref{th:3.5}-\ref{th:3.7}. In the situation of Theorem~\ref{th:3.5} one gets an extremal quadratic form. In the situation of Theorem~\ref{th:3.6} there are two possibilities.
Let us mention, that it is straightforward to check whether a $3\times3$ quadratic form is polyconvex or not [\ref{bib:Dac}], thus Theorem~\ref{th:3.6} gives a complete algorithm to identity the whether the quadratic form is aa extremal (if it turns out to be non-polyconvex). In the situation of Theorem~\ref{th:3.7}, we can make the same observation and it only remains to understand whether the quadratic form is a sum of an extremal form with a zero acoustic tensor determinant and a rank-one form. It must be straightforward as well as Milton [\ref{bib:Mil3}] gives a formula for the rank-one form that can be subtracted from a quasiconvex quadratic form preserving the quasiconvexity. The situation when
the determinant is not an extremal polynomial is clearly supported by Conjecture~\ref{th:3.8}.

\section{Auxiliary lemmas}
\label{sec:4}
In this section we prove some auxiliary lemmas to be utilized in the proof of the main results.
Recall first a general fact from the theory of commutative algebras, e.g., [\ref{bib:Hun}, Theorem 6.14].
\begin{Theorem}
\label{th:4.1}
Every polynomial with real coefficients and in $n$ variables can be uniquely written as a product of irreducible polynomials
again with real coefficients.
\end{Theorem}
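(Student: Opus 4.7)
The plan is to recognize that the statement is precisely the assertion that the polynomial ring $\mathbb{R}[x_1,x_2,\ldots,x_n]$ is a unique factorization domain (UFD), and then to establish this by induction on the number of variables. Existence of a factorization into irreducibles follows from a Noetherian-type degree argument (since the degree strictly drops with each nontrivial factorization), and the content of the theorem is really the uniqueness up to units (i.e.\ nonzero real scalars) and reordering.

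First I would handle the base case $n=1$. The ring $\mathbb{R}[x_1]$ admits the standard polynomial division algorithm, which makes it a Euclidean domain with Euclidean function given by the degree. Any Euclidean domain is a principal ideal domain, and every principal ideal domain is a UFD (the standard argument: an irreducible element in a PID is prime, and ascending chains of principal ideals stabilize because ideals correspond to divisors, giving both existence and uniqueness of factorizations). This yields the case $n=1$.

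Next I would carry out the inductive step: if $R$ is a UFD then so is the polynomial ring $R[x]$. This is the classical theorem of Gauss. The key tool is the notion of the content of a polynomial in $R[x]$ (the gcd of its coefficients, which is well-defined up to a unit since $R$ is a UFD), together with Gauss's lemma stating that the product of two primitive polynomials is primitive. From this one deduces that a primitive polynomial in $R[x]$ is irreducible in $R[x]$ if and only if it is irreducible in $K[x]$, where $K$ is the field of fractions of $R$. Since $K[x]$ is a UFD by the base case applied over $K$, factorizations in $K[x]$ can be cleared of denominators and their contents rescaled to produce factorizations in $R[x]$; uniqueness in $K[x]$ together with uniqueness of the content factorization in $R$ yields uniqueness in $R[x]$. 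Applying this step successively with $R=\mathbb{R},\ \mathbb{R}[x_1],\ \mathbb{R}[x_1,x_2],\ \ldots$ gives that $\mathbb{R}[x_1,\ldots,x_n]$ is a UFD, which is exactly the claim.

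The main obstacle is Gauss's lemma itself: verifying that the product of two primitive polynomials remains primitive requires a small but careful prime-by-prime argument in $R$, and the translation between factorizations in $R[x]$ and in $K[x]$ needs to be handled cleanly. Everything else is routine bookkeeping about contents and units. Since this is a standard foundational result in commutative algebra, in the paper itself it seems cleanest to simply cite the textbook reference (as the authors do) rather than reproduce the full Gauss-lemma argument.
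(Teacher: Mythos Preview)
Your proposal is correct: it outlines the standard Gauss-lemma plus induction argument showing that $\mathbb{R}[x_1,\ldots,x_n]$ is a UFD. The paper itself offers no proof whatsoever, simply citing Hungerford's \textit{Algebra} (Theorem~6.14) for this classical fact---precisely what you recommend at the end of your proposal.
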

Next, let us mention, that in what follows $P(x)\ \vdots \ Q(x)$ or just $P\ \vdots \ Q$ means for polynomials $P(x)$ and $Q(x),$ that
$Q(x)$ divides $P(x).$ Utilizing the above theorem, let us prove the following simple lemma.
\begin{Lemma}
\label{lem:4.2}
Assume $x\in\mathbb R^d$ and $A(x)=(a_{ij}(x)),$ $i=1,\dots, m,\  j=1,\dots, n$ is an $m\times n$ matrix with polynomial coefficients, such, that each term
$a_{ij}(x)$ is a homogeneous polynomial of degree $2p,$ where $m,n,d,p\in\mathbb N.$ If $\mathrm{rank}(A(x))\leq 1$ for all $x\in\mathbb R^d$, then there exist homogeneous polynomials
$b_i(x)$ and $c_i(x),$ such that $a_{ij}(x)=b_i(x)c_j(x),$ for $i=1,\dots, m, \ j=1,\dots, n.$
\end{Lemma}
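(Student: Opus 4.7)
The plan is to exploit the polynomial identities forced by the rank-$\leq 1$ hypothesis together with unique factorization in $\mathbb{R}[x_1,\ldots,x_d]$ (Theorem~\ref{th:4.1}) to solve explicitly for $b_i$ and $c_j$. First I would dispose of the trivial case $A(x)\equiv 0$ by taking $b_i=c_j=0$. Otherwise, fix indices $i_0,j_0$ with $a_{i_0j_0}\not\equiv 0$. Because every $2\times 2$ minor of $A(x)$ vanishes at each numerical $x\in\mathbb{R}^d$ and is itself a polynomial, it must vanish identically, which gives the key identity
\begin{equation*}
a_{i_0j_0}(x)\,a_{ij}(x)=a_{ij_0}(x)\,a_{i_0j}(x)\qquad\text{for all }i,j.
\end{equation*}

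Next I would strip a common factor out of the $j_0$-th column: set $q(x)=\gcd(a_{1j_0},\ldots,a_{mj_0})$ in the UFD $\mathbb{R}[x_1,\ldots,x_d]$ and write $a_{ij_0}(x)=q(x)\,\tilde u_i(x)$, so that $\gcd(\tilde u_1,\ldots,\tilde u_m)=1$. Since the gcd of homogeneous polynomials is homogeneous, $q$ and each $\tilde u_i$ are homogeneous, of degrees $\deg q$ and $2p-\deg q$ respectively. Substituting into the identity above and cancelling $q(x)$ yields the cleaner relation
\begin{equation*}
\tilde u_{i_0}(x)\,a_{ij}(x)=\tilde u_i(x)\,a_{i_0j}(x)\qquad\text{for all }i,j.
\end{equation*}
The candidate factorization is then $b_i(x)=\tilde u_i(x)$ and $c_j(x)=a_{i_0j}(x)/\tilde u_{i_0}(x)$, for which $b_ic_j=a_{ij}$ is immediate.

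The core of the argument, and the place where unique factorization does the real work, is showing that $\tilde u_{i_0}(x)$ actually divides $a_{i_0j}(x)$ in $\mathbb{R}[x_1,\ldots,x_d]$ so that $c_j$ is genuinely a polynomial. Let $p(x)$ be any irreducible factor of $\tilde u_{i_0}$ with multiplicity $e\geq 1$. Global coprimality of $\tilde u_1,\ldots,\tilde u_m$ produces some index $i_1$ with $p\nmid\tilde u_{i_1}$; applying the relation with $i=i_1$ gives $p^e\mid\tilde u_{i_1}\,a_{i_0j}$, and since $p$ is irreducible with $p\nmid\tilde u_{i_1}$ it follows that $p^e\mid a_{i_0j}$. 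Running this over all irreducible factors of $\tilde u_{i_0}$ with their multiplicities (invoking Theorem~\ref{th:4.1}) yields $\tilde u_{i_0}\mid a_{i_0j}$, so $c_j$ is a polynomial. Homogeneity of $c_j$ is automatic since it is a quotient of homogeneous polynomials.

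The main obstacle I expect is the divisibility step just described: one cannot proceed by pointwise evaluation (the entries may share nontrivial common factors in unexpected ways), and one must genuinely invoke the UFD property of the polynomial ring together with the coprimality that was manufactured by pulling out the column gcd $q$. Everything else in the argument is bookkeeping: the rank condition produces the quadratic identity, the gcd extraction normalizes a column, and the degree tracking delivers homogeneity of $b_i$ and $c_j$.
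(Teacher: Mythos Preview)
Your proof is correct and uses the same essential ingredient as the paper's argument---the UFD property of $\mathbb{R}[x_1,\ldots,x_d]$ together with a coprimality trick manufactured by extracting a gcd---but the two proofs are organized differently. The paper proceeds by induction on $m+n$: it applies the inductive hypothesis to the left $m\times(n-1)$ block to obtain $a_{ij}=b_ic_j$ for $j<n$, normalizes so that $\gcd(b_1,\ldots,b_m)=1$, and then runs exactly the irreducible-factor divisibility argument you wrote down to show $b_1\mid a_{1n}$ and hence $a_{in}=b_ic_n$. Your version is more direct: you fix a single nonzero entry, extract the gcd of its column once, and handle all remaining columns simultaneously via the $2\times2$ minor identity. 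The gain is that you avoid the induction scaffolding and the attendant case analysis (zero rows/columns); the paper's inductive layout, on the other hand, makes the reduction to a single new column very explicit. At the level of the actual algebra the two arguments are the same.
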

\begin{proof}
We prove the statement by induction doing the induction for $m+n.$ For $m+n=2$ there is nothing to prove.
Assume the statement is true for $m+n=L$ and let us prove it for $m+n=L+1.$ We can without loss of generality assume, that $n\geq 2$
as otherwise we can transpose the matrix $A(x).$ It is clear that the left $m\times (n-1)$ block of $A(x)$
has a rank at most $1,$ thus by induction $A(x)$ has the form
\begin{equation}
\label{4.1}A(x)=
\begin{bmatrix}
b_1(x)c_1(x) & b_1(x)c_2(x) &\dots & b_1(x)c_{n-1}(x) & a_{1n}(x)\\
b_2(x)c_1(x) & b_2(x)c_2(x) &\dots & b_2(x)c_{n-1}(x) & a_{2n}(x)\\
\cdot & \cdot & \dots \cdot & \cdot & \cdot\\
\cdot & \cdot & \dots \cdot & \cdot & \cdot\\
\cdot & \cdot & \dots \cdot & \cdot & \cdot\\
b_m(x)c_1(x) & b_m(x)c_2(x) &\dots & b_m(x)c_{n-1}(x) & a_{mn}(x)\\
\end{bmatrix}.
\end{equation}
First of all observe, that if $A(x)$ has a zero row or column, then by removing it we reduce the sum $m+n$ and thus finish the proof by induction
by taking the factor $b_i(x)$ or $c_j(x)$ of that row or column to be identically zero. Thus we assume, that all polynomials $c_j(x)$
are nonzero. Next, it is clear, that one can without loss of generality assume,
that the greatest common factor of the polynomials $b_1(x),b_2(x),\dots,b_m(x)$ is $1,$
as otherwise, it can be absorbed in each of $c_i(x),$ for $i=1,2,\dots,{n-1}.$
We have from the property $\mathrm{rank}(A(x))\leq 1,$ that
$$b_1(x)c_j(x)a_{in}(x)=b_i(x)c_j(x)a_{1n}(x),\quad\text{for all}\quad i=1,2,\dots,m,$$
thus as $c_j(x)\neq 0,$ we obtain
\begin{equation}
\label{4.2}
b_1(x)a_{in}(x)=b_i(x)a_{1n}(x),\quad\text{for all}\quad i=1,2,\dots,m.
\end{equation}
Assume $p(x)$ is any irreducible factor of $b_1(x)$ that appears with the power $\alpha$ in $b_1(x),$ i.e., $b_1(x)=p^\alpha(x)q(x).$
We then get from (\ref{4.2}), that
\begin{equation}
\label{4.3}
p^\alpha(x)q(x)a_{in}(x)=b_i(x)a_{1n}(x),\quad\text{for all}\quad i=1,2,\dots,m.
\end{equation}
Since, $\mathrm{GCF}(b_1(x),b_2(x),\dots,b_m(x))=1,$ then there exists an index $i\in \{1,2,\dots,d\}$
such that $b_i(x)$ is not divisible by $p(x),$ thus we get from (\ref{4.3}), that $a_{1n}(x)\ \vdots \ p^\alpha(x)$.
As $p(x)$ was any factor, we get $a_{1n}(x)\ \vdots \ b_1(x),$ i.e,
\begin{equation}
\label{4.4}
a_{1n}(x)=b_1(x)c_{n}(x).
\end{equation}
From the equality (\ref{4.2}) we get either $b_1(x)\equiv 0$ or $a_{in}(x)=b_i(x)c_{n}(x),$ for $i=2,3,\dots,m.$ In the first case
the matrix $A(x)$ has a row of zeros, thus by induction we are done, in the second case we get the desired representation
 of $A(x).$
\end{proof}
\begin{Lemma}
\label{lem:4.3}
Assume $A=A(x),$ where $x=(x_1,x_2,x_3)$ is a $3\times3$ symmetric matrix the entries of which are quadratic forms in $x.$ Assume furthermore that
\begin{equation*}
\mathrm{det}(A(x))\equiv 0\quad\text{for all}\quad x\in\mathbb R^3,
\end{equation*}
and that the cofactor matrix $A_{cof}$ of $A$ has nonnegative elements on the diagonal.
Then $A_{cof}$ has one of the forms shown below:
\begin{equation}
\label{4.7}
\begin{bmatrix}
P & \alpha P & \beta P\\
\alpha P & \alpha^2 P & \alpha\beta P\\
\beta P & \alpha\beta P & \beta^2P
\end{bmatrix},\quad
 \begin{bmatrix}
P_1^2 & P_1Q_1 & P_1R_1\\
P_1Q_1 & Q_1^2 & Q_1R_1\\
P_1R_1 & Q_1R_1 & R_1^2
\end{bmatrix},\qquad
\begin{bmatrix}
l_1^2S & l_1l_2S & l_1l_3S\\
l_1l_2S & l_2^2S & l_2l_3S\\
l_1l_3S & l_2l_3S & l_3^2S
\end{bmatrix},\quad \alpha,\beta\in\mathbb R.
\end{equation}
\end{Lemma}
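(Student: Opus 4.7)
The plan is to exploit the vanishing of $\det A$ to force a rank-one structure on $A_{\mathrm{cof}}$, then use symmetry together with the nonnegativity of the diagonal to isolate the three listed forms by a degree count. From $A\cdot A_{\mathrm{cof}}=(\det A)\,I\equiv 0$ and the fact that $A$ has rank at most $2$ at every point, one concludes $\mathrm{rank}(A_{\mathrm{cof}}(x))\le 1$ for all $x\in\mathbb R^3$. The entries of $A_{\mathrm{cof}}$ are homogeneous quartic forms in $x$, so Lemma~\ref{lem:4.2} (applied with $p=2$) yields polynomial vectors $b,c\in\mathbb R[x_1,x_2,x_3]^3$ with $A_{\mathrm{cof},ij}=b_ic_j$. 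The symmetry of $A_{\mathrm{cof}}$ forces $b_ic_j=b_jc_i$ for all $i,j$; unless $b\equiv 0$ (in which case $A_{\mathrm{cof}}\equiv 0$ and all three forms hold trivially), this identity yields $c=\lambda b$ for some $\lambda\in\mathbb R(x)$.

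The hypothesis $A_{\mathrm{cof},ii}=\lambda b_i^2\ge 0$ then forces $\lambda(x)\ge 0$ wherever it is defined. Writing $\lambda=p/q$ in lowest terms and using that each product $\lambda b_ib_j$ must be a polynomial, unique factorization in $\mathbb R[x_1,x_2,x_3]$ allows one to extract from each $b_i$ a common polynomial factor $g$ satisfying $q\mid g^2$. Substituting $b_i=g\tilde b_i$ and $u=pg^2/q$ produces the polynomial identity
\begin{equation*}
A_{\mathrm{cof}}(x)\;=\;u(x)\,\tilde b(x)\tilde b(x)^T,
\end{equation*}
and after absorbing any remaining square factors of $u$ into $\tilde b$ one may further assume that $u$ is square-free. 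Note $u$ is nonnegative because $u\tilde b_i^2=A_{\mathrm{cof},ii}\ge 0$.

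Homogeneity gives $\deg u+2\deg\tilde b_i=4$ for every index with $\tilde b_i\not\equiv 0$, so the square-free polynomial $u$ has degree $0$, $2$, or $4$. When $\deg u=0$, $u$ is a positive constant that can be rescaled into $\tilde b$, yielding $A_{\mathrm{cof}}=(P_1,Q_1,R_1)^T(P_1,Q_1,R_1)$ with quadratic entries, which is the second form of \eqref{4.7}. When $\deg u=2$, the $\tilde b_i$ are linear forms, giving the third form with $S=u$ and $l_i=\tilde b_i$. When $\deg u=4$, each $\tilde b_i$ is a real constant, and after permuting indices so that $\tilde b_1\ne 0$ the assignment $P=u\tilde b_1^2$, $\alpha=\tilde b_2/\tilde b_1$, $\beta=\tilde b_3/\tilde b_1$ recovers the first form. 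The main obstacle is the normalization step of the middle paragraph: one must verify, valuation by valuation over each irreducible factor of $q$, that the common factor $g$ with $q\mid g^2$ can indeed be pulled out of every $b_i$, so that the factorization $u\tilde b\tilde b^T$ is a genuine polynomial identity rather than merely a rational one.
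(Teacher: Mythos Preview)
Your argument is correct and takes a different route from the paper's. The paper first records that $\det A\equiv 0$ forces $\mathrm{rank}(A_{\mathrm{cof}})\le 1$, and then states (without proof, as a consequence of unique factorization) a trichotomy for nonnegative degree--four polynomials $P,Q$ satisfying $PQ=R^2$; it applies this pairwise to the three principal $2\times 2$ blocks of $A_{\mathrm{cof}}$ and leaves the combination of the three pairwise outcomes to the reader. You instead invoke Lemma~\ref{lem:4.2} once to obtain a global factorization $A_{\mathrm{cof}}=bc^T$, use symmetry to get $c=\lambda b$ with $\lambda\in\mathbb R(x)_{\ge 0}$, and then clear denominators valuation by valuation to reach the normal form $A_{\mathrm{cof}}=u\,\tilde b\tilde b^{T}$ with $u$ square-free and nonnegative; the degree of $u$ then neatly separates the three listed shapes. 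Your approach buys a cleaner global structure and avoids the somewhat delicate step of reconciling the three pairwise classifications; the paper's approach is terser but leaves more to the reader (and its case~(i), ``$P\equiv Q$'', should really read $P=c^2Q$).

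Two small remarks. First, the valuation step you flag as the ``main obstacle'' goes through exactly as you indicate: from $q\mid b_ib_j$ for all $i,j$ and $\gcd(p,q)=1$ one gets, for each irreducible $\pi$ with $\pi^{m}\Vert q$, that $2\min_i v_\pi(b_i)\ge m$, hence $g=\prod_\pi \pi^{\lceil m/2\rceil}$ divides every $b_i$ and satisfies $q\mid g^2$; homogeneity of $p,q,g,\tilde b_i,u$ follows since irreducible factors of a homogeneous polynomial are homogeneous. Second, your appeal to ``permuting indices so that $\tilde b_1\neq 0$'' in the $\deg u=4$ case is the right fix; strictly speaking the first template in \eqref{4.7} is written with the unit in the $(1,1)$ slot, so both your argument and the paper's are implicitly reading the three templates up to simultaneous permutation of rows and columns.
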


\begin{proof}
The identity
$$\mathrm{det}(A(x))\equiv 0\quad\text{for all}\quad x\in\mathbb R^3$$
implies that
\begin{equation}
\label{4.8}
\mathrm{rank}(A_{cof}(x))\leq 1\quad\text{for all}\quad x\in\mathbb R^3.
\end{equation}
The proof is now based on the following fact:\\
\textit{Assume  $P(x)$ $Q(x)$ and $R(x)$ are homogeneous polynomials of degree four such that
$P(x),Q(x)\geq 0$ and $P(x)Q(x)=R^2(x)$ for all $x\in\mathbb R^3.$ Then one of the three situations is true:
\begin{itemize}
\item[(i)] $P(x)\equiv Q(x)$ and $R(x)=\pm P(x),$
\item[(i)] $P(x)=P_1^2(x),$ $Q(x)=Q_1^2(x)$ and $R(x)=P_1(x)Q_1(x),$
\item[(i)] $P(x)=S(x)l_1^2(x),$ $Q(x)=S(x)l_2^2(x)$ and $R(x)=l_1(x)l_2(x)S(x).$
\end{itemize}}
The above fact is a direct consequence of the unique representation of polynomials as a product of irreducible multipliers.
To finish the proof, we consider all 3 principal $2\times 2$ minors of the matrix $A(x),$ for each of which the product of the diagonal
terms is the square of the off-diagonal term, thus the above fact can be utilized to obtain the structure of the matrix $A(x)$.
\end{proof}
The next lemma is a sign-changing property of indefinite quadratic forms.
\begin{Lemma}
\label{lem:4.4}
Assume $f(\xi)$ is an indefinite quadratic form in $n$ variables that vanishes at a point $(\xi_1^0,\xi_2^0,\dots,\xi_n^0).$ Then given any
open neighbourhood $U$ of the point $\xi^0=(\xi_1^0,\xi_2^0,\dots,\xi_n^0),$ there exist two open subsets $U_1,U_2\subset U$ of $U,$ such that
$$f(\xi)<0,\quad \xi\in U_1\quad\text{and}\quad f(\xi)>0,\quad \xi\in U_2.$$
\end{Lemma}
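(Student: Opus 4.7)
The plan is to use the associated symmetric bilinear form of $f$ and split into two cases according to whether $\xi^0$ lies in the radical (kernel) of that form. Write $f(\xi)=B(\xi,\xi)$ where $B$ is a symmetric bilinear form on $\mathbb{R}^n$, so that for any $v\in\mathbb{R}^n$,
\begin{equation*}
f(\xi^0+tv)=f(\xi^0)+2tB(\xi^0,v)+t^2f(v)=2tB(\xi^0,v)+t^2f(v),
\end{equation*}
since $f(\xi^0)=0$.

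First I would handle the generic case: suppose there is some $v\in\mathbb{R}^n$ with $B(\xi^0,v)\neq 0$. Then the single-variable function $g(t):=f(\xi^0+tv)$ satisfies $g(0)=0$ and $g'(0)=2B(\xi^0,v)\neq 0$, so $g$ is strictly monotone in a neighborhood of $0$ and attains strictly positive and strictly negative values at points $\xi^0+t_+v$ and $\xi^0+t_-v$ lying in $U$ for sufficiently small $|t_\pm|$. By continuity of $f$, each of these points has an open neighborhood in $U$ on which $f$ keeps its sign, yielding the required $U_1$ and $U_2$.

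The remaining case is $B(\xi^0,v)=0$ for every $v\in\mathbb{R}^n$, i.e., $\xi^0$ lies in the radical of $B$. Then for every $v$ the above identity collapses to $f(\xi^0+v)=f(v)$, so $f$ is translation-invariant by $\xi^0$. Since $f$ is indefinite, there exist $a,b\in\mathbb{R}^n$ with $f(a)>0$ and $f(b)<0$; by homogeneity of degree two, for every $\varepsilon>0$ we have $f(\varepsilon a)=\varepsilon^2 f(a)>0$ and $f(\varepsilon b)=\varepsilon^2 f(b)<0$. Choosing $\varepsilon$ small enough that $\xi^0+\varepsilon a$ and $\xi^0+\varepsilon b$ both lie in $U$, we obtain $f(\xi^0+\varepsilon a)=f(\varepsilon a)>0$ and $f(\xi^0+\varepsilon b)=f(\varepsilon b)<0$, and continuity of $f$ again yields open neighborhoods $U_2$ and $U_1$ of these two points contained in $U$.

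This argument is largely routine; the only thing worth flagging is the clean dichotomy provided by the radical, which eliminates the only apparent obstacle, namely the possibility that the derivative of $f$ vanishes at $\xi^0$. In that degenerate situation the translation invariance $f(\xi^0+v)=f(v)$ does the work directly, so there is no hard step.
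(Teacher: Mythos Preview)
Your proof is correct. The two approaches are genuinely different, though both are elementary. The paper first diagonalizes $f$ to the canonical form $\sum_{i=1}^m \xi_i^2 - \sum_{i=m+1}^{m+k}\xi_i^2$, perturbs the first $m$ coordinates by a common $\epsilon$ and the next $k$ by a common $\delta$, and then computes $f$ at the perturbed point explicitly, splitting into cases according to whether $S_1=\sum_{i\le m}\xi_i^0$ or $S_2=\sum_{m<i\le m+k}\xi_i^0$ is nonzero. Your argument is coordinate-free: you use the associated bilinear form $B$ and the expansion $f(\xi^0+tv)=2tB(\xi^0,v)+t^2f(v)$, splitting cleanly into the case where some directional derivative $B(\xi^0,v)$ is nonzero (so a one-variable first-order argument works) and the case where $\xi^0$ lies in the radical of $B$ (so $f(\xi^0+v)=f(v)$ and indefiniteness plus homogeneity finish immediately). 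Your dichotomy is more intrinsic and avoids the reduction to canonical form; the paper's version is more explicit but requires tracking separate subcases based on $S_1,S_2$ that do not correspond exactly to your radical/non-radical split. Either way the lemma is routine.
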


\begin{proof}
We can without loss of generality assume, that $f(\xi)$ has canonical form and thus due to its indefiniteness, $f$ has the form
\begin{equation}
\label{4.9}
f(\xi)=\sum_{i=1}^m \xi_i^2-\sum_{i=m+1}^{m+k} \xi_i^2,
\end{equation}
where $1\leq m<m+k\leq n.$ Assume $\epsilon$ and $\delta$ are small numbers.
We perturb the points $\xi_i^0$ to $\xi_i^0+\epsilon,$ for $i=0,1,\dots,m$ and to $\xi_i^0+\delta,$ for $i=m+1,\dots,m+k$
and denote
$$\xi_{\epsilon,\delta}=(\xi_1^0+\epsilon,\dots,\xi_m^0+\epsilon,\xi_{m+1}^0+\delta,\dots,\xi_{m+k}^0+\delta,\xi_{m+k+1}^0+\epsilon,\dots, \xi_{n}^0+\epsilon).$$
It is clear, that if $\epsilon$ and $\delta$ are small enough, them $\xi_{\epsilon,\delta}\in U.$ We have at the new point
$$f(\xi_{\epsilon,\delta})=2\epsilon(\xi_1^0+\cdots\xi_m^0)-2\delta(\xi_{m+1}^0+\cdots\xi_{m+k}^0)+m\epsilon^2-k\delta^2.$$
Denote $S_1=\xi_1^0+\cdots\xi_m^0$ and $S_2=\xi_{m+1}^0+\cdots\xi_{m+k}^0$. If $S_1\neq 0,$ then we can choose $\delta=\epsilon^2$ and choosing the sign of
$\epsilon$ we can make the values of $f$ both, positive and negative in some open subsets of $U.$ The case $S_2\neq 0$ is analogous. If now $S_1=S_2=0,$
then we have $f(\xi_{\epsilon,\delta})=m\epsilon^2-k\delta^2,$ which can again be made both positive and negative in open subsets of $U$ for small
$\epsilon$ and $\delta$ by choosing either $\delta=\epsilon^2$ or $\epsilon=\delta^2.$
\end{proof}

The last lemma will be crucial in the proof of Theorem~\ref{th:3.6}.
\begin{Lemma}
\label{lem:4.5}
Let $P(x)$ be a quadratic form and let $Q(x)$ be a fourth order nonnegative homogeneous polynomial in $x\in\mathbb R^3.$ Assume $\delta>0$ and that
for any $t\in[0,\delta]$ there exists a quadratic form $R_t(x)$ such, that $P^2(x)-tQ(x)=R_t^2(x).$ Then $Q(x)=\alpha^2P^2(x)$ for some $\alpha\in\mathbb R.$
\end{Lemma}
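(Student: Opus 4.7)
The plan is to establish that the one-parameter family $R_t$ is analytic in $t$ near $t=0$, expand it as a Taylor series, and squeeze arithmetic information out of $R_t^2 = P^2 - tQ$ in the polynomial ring $\mathbb{R}[x_1,x_2,x_3]$. Specifically, comparing coefficients of $t^k$ will produce an infinite sequence of divisibility relations $P^{2k-1}\mid Q^k$, which unique factorization will collapse into $P^2\mid Q$; a degree count together with the sign of $Q$ then gives $Q=\alpha^2 P^2$.

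After dispatching the trivial case $P\equiv 0$ (immediate from $R_t^2=-tQ\leq 0$, which forces both $R_t$ and $Q$ to vanish), I would prove the analyticity by selecting six points $x_1,\dots,x_6\in\mathbb{R}^3$ with $P(x_i)\neq 0$ and for which the evaluation map from the six-dimensional space of quadratic forms to $\mathbb{R}^6$ is an isomorphism. Fixing the branch $R_0=P$, on each $x_i$ we have $R_t(x_i)=\mathrm{sgn}(P(x_i))\sqrt{P(x_i)^2-tQ(x_i)}$, which is analytic in $t$ near $0$, so the dual-basis isomorphism transfers analyticity to $R_t$ as a quadratic form. Writing $R_t=P+A_1 t+A_2 t^2+\cdots$, substitution into $R_t^2=P^2-tQ$ and Cauchy-product matching give the recursion $2PA_k=-\sum_{i+j=k,\,1\leq i,j<k}A_iA_j$, whose explicit solution (equivalent to expanding $R_t=P\sqrt{1-tQ/P^2}$ binomially) is $A_k=c_k\,Q^k/P^{2k-1}$ with $c_k=\binom{1/2}{k}(-1)^k\neq 0$. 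Since each $A_k$ is forced to be a polynomial, I conclude $P^{2k-1}\mid Q^k$ for every $k\geq 1$.

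To finish, I invoke unique factorization in $\mathbb{R}[x_1,x_2,x_3]$ (Theorem~\ref{th:4.1}). For any irreducible $p\mid P$ with $p$-multiplicities $a$ in $P$ and $b$ in $Q$, the infinite family of divisibilities yields $(2k-1)a\leq kb$ for all $k\geq 1$, so letting $k\to\infty$ gives $b\geq 2a$. Thus $p^{2a}\mid Q$ for every irreducible factor of $P$, which combines into $P^2\mid Q$. Since $\deg Q=\deg P^2=4$, the quotient is a constant: $Q=cP^2$ for some $c\in\mathbb{R}$. Nonnegativity of $Q$ and $P^2\not\equiv 0$ force $c\geq 0$, so $Q=\alpha^2 P^2$ with $\alpha=\sqrt{c}$.

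The main obstacle I anticipate is the analyticity step, since the hypothesis only guarantees pointwise existence of a quadratic form $R_t$ for each $t$, with no a priori regularity in $t$. The six-point trick sidesteps this by reducing the six-dimensional algebraic problem to six scalar square-root extractions, with a single globally consistent branch pinned down by continuity of $t\mapsto R_t$ at $t=0$ in the finite-dimensional coefficient space; thereafter the argument becomes purely formal polynomial arithmetic.
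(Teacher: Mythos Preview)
Your argument is essentially correct and takes a genuinely different route from the paper's. The paper splits into two cases. When $P$ is indefinite, it observes that $P(x)=0$ forces $R_t(x)=0$ (from $tQ(x)+R_t(x)^2=0$ and $Q\geq 0$), and then invokes Marcellini's theorem (Theorem~\ref{th:6.3}) to conclude $R_t=\lambda P$, whence $Q$ is a multiple of $P^2$. When $P$ is semidefinite, the paper puts $P$ in canonical form $x_1^2$, $x_1^2+x_2^2$, or $x_1^2+x_2^2+x_3^2$ and carries out an explicit coefficient-matching computation in each subcase. Your approach is more uniform: no case split, no external lemma, just analytic dependence on $t$ followed by the binomial expansion $R_t=P\sqrt{1-tQ/P^2}$ and the UFD divisibility $P^{2k-1}\mid Q^k$. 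This buys you a cleaner argument that would generalize immediately to $x\in\mathbb{R}^n$, whereas the paper's Case~2 is tied to three variables. The paper's approach, in exchange, avoids any analyticity discussion and stays entirely within elementary algebra.

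There is one point you should tighten. In the six-point step you \emph{define} a quadratic form $\tilde R_t$ by prescribing $\tilde R_t(x_i)=\mathrm{sgn}(P(x_i))\sqrt{P(x_i)^2-tQ(x_i)}$; this $\tilde R_t$ is analytic by construction, but you have not yet verified $\tilde R_t^{\,2}=P^2-tQ$. Vanishing at six points determines a quadratic form but not a quartic, so agreement of $\tilde R_t^{\,2}$ with $P^2-tQ$ at $x_1,\dots,x_6$ alone is insufficient. What you need is that the hypothesized $R_t$ (after a global sign) actually has the sign pattern $\mathrm{sgn}(P(x_i))$ at the six points, so that $R_t=\tilde R_t$. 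This follows from a short compactness argument: if the sign pattern were ``wrong'' along a sequence $t_n\to 0$, pass to a subsequence with fixed pattern $\epsilon_\ast$; then $R_{t_n}\to R_\ast$ in the six-dimensional coefficient space with $R_\ast(x_i)=\epsilon_{\ast,i}|P(x_i)|$ and $R_\ast^2=P^2$, forcing $R_\ast=\pm P$ and hence $\epsilon_{\ast,i}=\pm\mathrm{sgn}(P(x_i))$ for a single global sign, a contradiction. Once this is added, the remainder of your proof (the recursion $A_k=c_kQ^k/P^{2k-1}$ with $c_k=\binom{1/2}{k}(-1)^k\neq 0$, the divisibility $(2k-1)a\leq kb$ for every irreducible factor, and the degree count) is correct as written.
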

\begin{proof}
Consider two cases.\\
\textbf{Case 1: The form $P(x)$ is indefinite.} Fix any $t\in (0,\delta).$ It is clear, that if $P(x)=0,$ then the condition $tQ(x)+R_t^2(x)=0$ and the nonnegativity of $Q(x)$ imply, that $R_t(x)=0,$ thus Marecellini's theorem (see Theorem~\ref{th:6.3}, Section~\ref{sec:6}) implies, that
$R_t(x)=\lambda P(x)$ for all $x\in\mathbb R^3$ and some $\lambda\in\mathbb R,$ which then gives $Q(x)=\alpha^2P^2(x).$\\
\textbf{Case 2: The form $P(x)$ is definite.} In this case we can without loss of generality assume, that $P(x)$ is positive semi-definite and it
has a canonical form. If $P(x)=x_1^2,$ then the inequality $x_1^4-\delta Q(x)\geq 0$ for all $x_1\in\mathbb R$ implies $Q(x)=\alpha x_1^4,$ thus we
are done. If $P(x)=x_1^2+x_2^2,$ then first of all the inequality $(x_1^2+x_2^2)^2-\delta Q(x)\geq 0$ for all $x_1,x_2\in\mathbb R$ implies, that $Q(x)$ depends only on $x_1$ and $x_2,$ i.e., $Q(x)=ax_1^4+2bx_1^3x_2+cx_1^2x_2^2+2dx_1x_2^3+ex_2^4.$ We then have
\begin{align}
\label{4.10}
P^2(x)-tQ(x)&=(1-ta)x_1^4-2tbx_1^3x_2+(2-tc)x_1^2x_2^2-2tdx_1x_2^3+(1-te)x_2^4\\ \nonumber
&=R_t^2(x_1,x_2)\\ \nonumber
&=(\sqrt{1-ta}x_1^2-\frac{tb}{\sqrt{1-ta}}x_1x_2+\sqrt{1-te}x_2^2)^2,
\end{align}
from which we get $d=\frac{b\sqrt{1-te}}{\sqrt{1-ta}}.$ We aim to show that $b=d=0.$ If $b=0,$ then the equality $d=\frac{b\sqrt{1-te}}{\sqrt{1-ta}}$ implies $d=0.$ If $b\neq 0,$ then we get $\frac{d}{b}=\frac{\sqrt{1-te}}{\sqrt{1-ta}},$ which means, that the ratio $\frac{\sqrt{1-te}}{\sqrt{1-ta}}$ is constant, which is possible only if $a=e$ and thus we get $b=d$ as well. Equating now the coefficients of $x_1^2x_2^2$ of both sides of (\ref{4.10}), we get
$$(2-tc)(1-ta)=b^2t^2+2(1-ta)^2,\quad t\in(0,\delta),$$
thus we obtain
$$
t(2a-c)+t^2(ac-2a^2-b^2)=0,\quad t\in(0,\delta),
$$
which then implies $c=2a$ and $b=0.$ Thus we get in all cases, that $b=d=0,$ therefore (\ref{4.10}) reduces to
\begin{equation}
\label{4.11}
P^2(x)-tQ(x)=(1-ta)x_1^4+(2-tc)x_1^2x_2^2+(1-te)x_2^4=(\sqrt{1-ta}x_1^2+\sqrt{1-te}x_2^2)^2,
\end{equation}
which gives from the equality of the coefficients of $x_1^2x_2^2,$
$$4t(a+e-c)+t^2(c^2-4ae)=0,\quad t\in(0,\delta),$$
and thus $c=a+e$ and $c^2=4ae,$ which then gives $(a-e)^2=0,$ thus $a=e$ and $c=2a,$ and we finally get $Q(x)=aP^2(x).$
In the last case when $P(x)=x_1^2+x_2^2+x_3^2,$ we can substitute $x_3=0$ in the equality $P^2(x)-tQ(x)=R_t^2(x)$ to get $P^2(x_1,x_2,0)-tQ(x_1,x_2,0)=R_t^2(x_1,x_2,0)$, from where we already know, that the $(x_1,x_2)$ part of $Q$ has the form $a(x_1^2+x_2^2)^2.$
Similarly, the $(x_1,x_3)$ and $(x_2,x_3)$ parts of $Q$ have the forms $a(x_1^2+x_3^2)^2$ and $a(x_2^2+x_3^2)^2,$ which then implies, that $Q$ has the
desired form $aP^2(x).$ The proof is finished now.
\end{proof}

\section{Extremal quadratic forms of arbitrary dimension}
\label{sec:5}
In this section we prove the main result of the paper.

\begin{proof}[Proof of Theorem~\ref{th:3.4}]
Assume in contradiction that $f(x,y)$ is not an extremal, then there exists a nonzero rank-one form $(xBy^T)^2$ such that
$$f(x,y)-(xBy^T)^2\geq 0\qquad\text{for all}\qquad x,y\in\mathbb R^d.$$
From the inequality
$$f(x,y)-(xBy^T)^2\geq 0$$
 we get that
 \begin{equation}
 \label{5.1}
 f(x,y)-t(xBy^T)^2\geq 0\quad\text{ for all }\quad t\in [0,1]\quad\text{and}\quad x,y\in\mathbb R^d.
 \end{equation}
Denote now by $T(y)$ the $y-$matrix of the biquadratic form $f(x,y)$ and by $T_t(y)$ the $y-$matrix of the biquadratic form $f(x,y)-t(xBy^T)^2.$
Inequality (\ref{5.1}) now implies that the $y-$matrix of the form $f(x,y)-t(xBy^T)^2,$ i.e., the matrix $T_t(y)$ is positive semi-definite for all $y\in\mathbb R^d$ and $t\in[0,1].$
We recall the Brunn-Minkowski inequality for determinants [\ref{bib:Bec.Bel},\ref{bib:Bel}], which we apply in the next step.
\begin{Theorem}[Brunn-Minkowski inequality]
\label{th:Minkowsky}
Assume $n\in\mathbb N$ and $A$ and $B$ are $n\times n$ symmetric positive semi-definite matrices. Then the following inequality holds:
$$(\mathrm{det}(A+B))^{1/n}\geq(\mathrm{det}(A))^{1/n}+(\mathrm{det}(B))^{1/n}.$$
\end{Theorem}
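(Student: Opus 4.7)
The plan is to reduce the matrix Brunn--Minkowski inequality to a scalar inequality for nonnegative reals, which then follows from the AM--GM inequality. First I would handle the case where $A$ is strictly positive definite; the general positive semi-definite case follows by replacing $A$ with $A + \epsilon I$, applying the positive-definite case, and letting $\epsilon \to 0^+$, using continuity of the determinant.

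Assuming $A$ is positive definite, let $A^{1/2}$ denote its unique symmetric positive definite square root. Then
\begin{equation*}
\det(A+B) \;=\; \det(A)\,\det\bigl(I + A^{-1/2} B A^{-1/2}\bigr).
\end{equation*}
The matrix $C := A^{-1/2} B A^{-1/2}$ is symmetric positive semi-definite, with eigenvalues $\lambda_1,\dots,\lambda_n \geq 0$. Hence $\det(I+C) = \prod_{i=1}^n (1+\lambda_i)$ and $\det(C) = \prod_{i=1}^n \lambda_i = \det(B)/\det(A)$. Dividing the desired inequality through by $(\det A)^{1/n}$ reduces everything to the scalar assertion
\begin{equation*}
\prod_{i=1}^n (1+\lambda_i)^{1/n} \;\geq\; 1 + \Bigl(\prod_{i=1}^n \lambda_i\Bigr)^{1/n}.
\end{equation*}

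To prove this I would actually establish the slightly more symmetric statement that for any $a_i, b_i \geq 0$,
\begin{equation*}
\prod_{i=1}^n (a_i+b_i)^{1/n} \;\geq\; \prod_{i=1}^n a_i^{1/n} + \prod_{i=1}^n b_i^{1/n},
\end{equation*}
from which the previous display is the case $a_i=1$, $b_i=\lambda_i$. After disposing of the trivial situation in which some $a_i+b_i$ vanishes, the proof is a double application of AM--GM:
\begin{equation*}
\prod_{i=1}^n \Bigl(\frac{a_i}{a_i+b_i}\Bigr)^{1/n} \leq \frac{1}{n}\sum_{i=1}^n \frac{a_i}{a_i+b_i}, \qquad \prod_{i=1}^n \Bigl(\frac{b_i}{a_i+b_i}\Bigr)^{1/n} \leq \frac{1}{n}\sum_{i=1}^n \frac{b_i}{a_i+b_i}.
\end{equation*}
Adding these two inequalities combines the right-hand sides to exactly $1$; multiplying back through by $\prod_{i=1}^n (a_i+b_i)^{1/n}$ yields the claim.

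The only place where a genuine idea is required is the pairing of AM--GM applications in this last step, since it is the grouping by coordinates that causes the right-hand sides to telescope to unity. Everything else is essentially routine: the continuity reduction to $A>0$, the simultaneous diagonalization through $A^{-1/2}$, and the translation between $\det(I+C)$ and the eigenvalues of $C$. I anticipate no obstacles beyond correctly tracking the exponent $1/n$ through the reduction.
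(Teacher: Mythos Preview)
Your proof is correct and is the standard argument: reduce to the case where $A$ is positive definite by continuity, conjugate by $A^{-1/2}$ to reduce to a statement about the eigenvalues of a single positive semi-definite matrix, and then finish with the scalar inequality $\prod_i(a_i+b_i)^{1/n}\geq\prod_i a_i^{1/n}+\prod_i b_i^{1/n}$ via the paired AM--GM trick. Each step is sound as written.

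As for comparison: the paper does not supply its own proof of this theorem. It is quoted as a known result (the Brunn--Minkowski inequality for determinants) with references to Beckenbach--Bellman and Bellman, and is used as a black box inside the proof of Theorem~\ref{th:3.4}. So there is nothing in the paper to compare your argument against; you have simply filled in a proof the authors chose to cite rather than reproduce.
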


The equality
$$T(y)=[T(y)-T_t(y)]+[T_t(y)],$$
the positive semi-definiteness of the matrices $T(y)-T_t(y)$ and $T_t(y),$ and the Brunn-Minkowski inequality give the following estimate:
$$\left(\mathrm{det}(T(y))\right)^{1/d}\geq \left(\mathrm{det}(T(y)-T_t(y))\right)^{1/d}+\left(\mathrm{det}(T_t(y))\right)^{1/d},$$
or
\begin{equation}
 \label{5.2}
\mathrm{det}(T(y))\geq\mathrm{det}(T_t(y)).
\end{equation}
By direct calculation we obtain
\begin{equation}
\label{5.3}
\mathrm{det}(T_t(y))=\mathrm{det}(T(y))-t\sum_{i,j=1}^ds_{i}s_{j}(T_{cof}(y))_{ij},
\end{equation}
where $s_i=\sum_{j=1}^db_{ij}y_j.$
Inequality (\ref{5.2}) together with the relation (\ref{5.3}) imply
$$\mathrm{det}(T(y))\geq\mathrm{det}(T(y))-t\sum_{i,j=1}^ds_{i}s_{j}(T_{cof}(y))_{ij}\geq 0,\qquad t\in[0,1].$$
As by the requirement of the theorem $\mathrm{det}(T(y))$ is not identically zero and for $t=0$ the right hand side of the last inequality is
exactly $\mathrm{det}(T(y))$, then by the extremality of $\mathrm{det}(T(y))$ the right hand side must be a multiple of $\mathrm{det}(T(y)),$ i.e.,
$$\mathrm{det}(T(y))-t\sum_{i,j=1}^ds_{i}s_{j}(T_{cof}(y))_{ij}=\lambda(t)\mathrm{det}(T(y)),$$
which gives
\begin{equation}
\label{5.4}
\sum_{i,j=1}^ds_{i}s_{j}(T_{cof}(y))_{ij}=\frac{1-\lambda(t)}{t}\mathrm{det}(T(y)),\qquad\text{for}\qquad t\in(0,1].
\end{equation}
Both parts of the equality (\ref{5.4}) are polynomials in $y=(y_1,y_2,\dots,y_d)$ thus the expression $\frac{1-\lambda(t)}{t}$ must be constant, therefore
(\ref{5.4}) reduces to
\begin{equation}
\label{5.5}
\sum_{i,j=1}^ds_{i}s_{j}(T_{cof}(y))_{ij}=\alpha\cdot \mathrm{det}(T(y)),
\end{equation}
for some constant $\alpha\in\mathbb R.$ The positive semi-definiteness of $T(y)$ implies positive semi-definiteness of the cofactor matrix $T_{cof}(y),$ thus
$$\sum_{i,j=1}^ds_{i}s_{j}(T_{cof}(y))_{ij}\geq 0\qquad\text{for all}\qquad y\in\mathbb R^d.$$
We have on the other hand $\mathrm{det}(T(y))\geq0,$ and $\mathrm{det}(T(y))$ is not identically zero, thus $\alpha\geq 0.$
Consider now two different cases:\\

\textbf{Case 1: $\alpha=0.$} In this case identity (\ref{5.5}) becomes
$$
\sum_{i,j=1}^ds_{i}s_{j}(T_{cof}(y))_{ij}\equiv 0.
$$
Recalling the positive semi-definiteness of $T_{cof}(y)$ we get the system of equalities:
\begin{equation}
\label{5.6}
\sum_{j=1}^ds_j(T_{cof}(y))_{ij}\equiv 0,\qquad i=1,2,\dots,d.
\end{equation}
Consider now the linear system
$$s_j=0,\quad j=1,2,\dots,d.$$
As the matrix $B$ has a rank at least $1,$ the solution to the above linear system is a
proper subspace $V$ of $\mathbb R^d,$ i.e., is a set with measure zero in $\mathbb R^d.$
Therefore the columns of the cofactor matrix $T_{cof}(y)$ are linearly dependent a.e. in $\mathbb R^d$ i.e.,
 $\mathrm{det}(T_{cof}(y))=0$ a.e. in $\mathbb R^d,$ and thus by continuity we get
$$\mathrm{det}(T_{cof}(y))\equiv 0.$$
 Taking the determinant of the identity $T(y)T_{cof}(y)^T=\mathrm{det}(T(y))I$ we get  $\mathrm{det}(T(y))\equiv 0,$  which is a contradiction.
\textbf{Case 1} is now proved.\\

\textbf{Case 2: $\alpha>0.$}  In this case identity (\ref{5.5}) implies
\begin{equation}
\label{5.7}
 \mathrm{det}(T(y))=k\sum_{i,j=1}^ds_{i}s_{j}(T_{cof}(y))_{ij},\qquad k>0.
\end{equation}
Our goal is now to prove that $(\ref{5.7})$ implies that $\mathrm{det}(T(y))$ is a reducible polynomial.
Consider the biquadratic form $g(x,y)=f(x,y)-k(xBy^T)^2.$ Denote by $G(y)$ the $y$-matrix of $g.$ By formula (\ref{5.3}) we have
$$\mathrm{det}(G(y))=\mathrm{det}(T(y))-k\sum_{i,j=1}^ds_{i}s_{j}(T_{cof}(y))_{ij},$$
thus owing to (\ref{5.7}) we get
\begin{equation}
\label{5.8}
\mathrm{det}(G(y))\equiv 0.
\end{equation}
Utilizing again formula (\ref{5.3}) we obtain from the identity $f(x,y)=g(x,y)+k(xBy^T)^2,$
$$
\mathrm{det}(T(y))=\mathrm{det}(G(y))+k\sum_{i,j=1}^ds_{i}s_{j}(G_{cof}(y))_{ij},
$$
thus taking into account (\ref{5.8}) we get
\begin{equation}
\label{5.9}
\mathrm{det}(T(y))=k\sum_{i,j=1}^ds_{i}s_{j}(G_{cof}(y))_{ij}.
\end{equation}
Next we apply Lemma~\ref{lem:4.2} to the matrix $G.$ The condition $\mathrm{det}(G(y))\equiv 0$ in (\ref{5.8})
implies, that
\begin{equation}
\label{5.10}
\mathrm{rank}(G_{cof}(y))\leq 1,\quad\text{for all}\quad y\in\mathbb R^d,
\end{equation}
thus we get from Lemma~\ref{4.2}, that
$(G_{cof}(y))_{ij}=a_i(y)b_j(y)$ for some polynomials $a_i(y)$ and $b_j(y),$ $i,j=1,2,\dots,d.$ We then obtain from (\ref{5.9}),
that
\begin{equation}
\label{5.11}
\mathrm{det}(T(y))=k\left(\sum_{i=1}^da_i(y)s_i(y)\right)\left(\sum_{j=1}^db_j(y)s_j(y)\right),
\end{equation}
which contradicts the irreducibility of the polynomial $\mathrm{det}(T(y))$. The theorem is proven now.
\end{proof}

\section{Quadratic forms with dimension $d=3.$}
\label{sec:6}
In this section we prove Theorems~\ref{th:3.5}-\ref{th:3.7}.
\begin{proof}[Proof of Theorem~\ref{th:3.5}] We aim to apply Lemma~\ref{lem:4.3} to the matrix $G_{cof}(y)$, for which
we must verify that the diagonal terms of the cofactor matrix $G_{cof}(y)$ are nonnegative.
To prove that $(G_{cof}(y))_{33}\geq 0$ for all $y\in\mathbb R^3,$ we can without
loss of generality assume that $(G_{cof}(y))_{33}\neq0$ for a.e. $y\in\mathbb R^3,$ which means that the first
two rows of $G(y)$ are linearly independent for a.e. $y\in\mathbb R^3.$ As $\mathrm{det}(G(y))\equiv 0,$
then the last row of $G(y)$ is a linear combination of the first two, i.e., the matrix $G(y)$ has the form:
\begin{equation}
\label{6.1} G(y)=
\begin{bmatrix}
g_{11} & g_{12} & ag_{11}+bg_{12} \\
g_{12} & g_{22} & ag_{12}+bg_{22} \\
ag_{11}+bg_{12} & ag_{12}+bg_{22} & a^2g_{11}+b^2g_{22}+2abg_{12}
\end{bmatrix},
\end{equation}
where
\begin{equation}
\label{6.2}
a(y)=-\frac{(G_{cof}(y))_{13}}{(G_{cof}(y))_{33}}\qquad\text{and}\qquad b(y)=-\frac{(G_{cof}(y))_{23}}{(G_{cof}(y))_{33}}
\end{equation}
are rational functions defined a.e. in $\mathbb R^3.$ From (\ref{6.1}) and (\ref{6.2}) we get
\begin{equation}
\label{6.3} G_{cof}(y)=
\begin{bmatrix}
a^2(G_{cof})_{33} & ab(G_{cof})_{33} & -a(G_{cof})_{33} \\
ab(G_{cof})_{33} & b^2(G_{cof})_{33} & -b(G_{cof})_{33} \\
-a(G_{cof})_{33} & -b(G_{cof})_{33} & (G_{cof})_{33}
\end{bmatrix}.
\end{equation}
Plugging in the values of the entries of the cofactor matrix $G_{cof}(y)$ from formula (\ref{6.3}) into (\ref{5.9}) and utilizing (\ref{6.2}) we arrive at
\begin{equation}
\label{6.4}
\mathrm{det}(T(y))=k\frac{(s_1(G_{cof}(y))_{13}-s_2(G_{cof}(y))_{23}-s_3(G_{cof}(y))_{33})^2}{(G_{cof})_{33}}.
\end{equation}
Therefore from the inequality $\mathrm{det}(T(y))\geq 0$ and the fact that $\mathrm{det}(T(y))$ is not identically zero we get the desired inequality
$(G_{cof}(y))_{33}\geq 0.$ The nonnegativity of the other diagonal terms of the cofactor matrix is analogous.
The requirements of Lemma~\ref{lem:4.3} are now satisfied, thus the matrix $G_{cof}(y)$ has one
of the forms shown in (\ref{4.7}). For the first form in (\ref{4.7}) we get by (\ref{5.9})
$$\mathrm{det}(T(y))=k(G_{cof}(y))_{11}(s_1+\alpha s_2+\beta s_3)^2.$$
The multiplier $(G_{cof}(y))_{11}$ is a homogeneous nonnegative polynomial of degree four in three variables,
thus by Hilbert's theorem [\ref{bib:Hil},\ref{bib:Pra}] it can be expressed as a sum of perfect squares, which
implies that $\mathrm{det}(T(y))$ is either a perfect square or not an extremal. For the second form
in (\ref{4.7}) we have
$$\mathrm{det}(T(y))=k(s_1P_1+s_2Q_1+s_2Q_3)^2,$$
thus $\mathrm{det}(T(y))$ is a perfect square. For the third form in (\ref{4.7}) we get
$$\mathrm{det}(T(y))=kS(s_1l_1+s_2l_2+s_3l_3)^2.$$
By the same argument as for the first case we get that $S(y)\geq 0$ for all $y\in\mathbb R^3,$ thus being a convex quadratic form
it can be written as the sum of squares of linear forms, which yields that $\mathrm{det}(T(y))$ is either a perfect square or not an
extremal polynomial. The proof is finished.
\end{proof}

\begin{proof}[Proof of Theorem~\ref{th:3.6}]
Before starting the proof, let us recall the following classical result of Terprstra, which will
be utilized in the sequel.
\begin{Theorem}[Terpstra]
\label{th:6.1}
Any $2\times n$ quasiconvex quadratic form is necessarily polyconvex, where $n\in\mathbb N.$
\end{Theorem}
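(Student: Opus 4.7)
The plan is to translate Terpstra's theorem into a polynomial-matrix factorization problem for the $2\times 2$ acoustic tensor and then prove that factorization using the special structure of $2\times 2$ positive-semidefinite matrices with quadratic-form entries.

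\textbf{Step 1 (Reformulation).} For $\xi\in\mathbb R^{2\times n}$ write $f(x\otimes y) = xT(y)x^T$ with $T(y)=\begin{pmatrix} p(y) & q(y) \\ q(y) & r(y)\end{pmatrix}$ the acoustic $y$-matrix, whose entries are quadratic forms in $y$. Quasiconvexity of $f$ is equivalent to $T(y)\geq 0$ for every $y$, i.e.\ $p,r\geq 0$ and $pr\geq q^2$ pointwise. A quadratic form $f$ on $\mathbb R^{2\times n}$ is polyconvex iff it admits a representation $f(\xi)=\sum_k(A_k:\xi)^2 + L(\xi)$ with $A_k\in\mathbb R^{2\times n}$ and $L$ a linear combination of the $2\times 2$ minors of $\xi$. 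Null-Lagrangians vanish on rank-one matrices, so evaluating at $\xi=x\otimes y$ yields $g(x,y):=f(x\otimes y)=\sum_k(x^T A_k y)^2$. Conversely, given such an SOS-of-bilinears decomposition of $g$, the convex form $\tilde f(\xi)=\sum_k(A_k:\xi)^2$ agrees with $f$ on rank-one matrices and hence differs from $f$ by a null-Lagrangian, proving polyconvexity. The theorem therefore reduces to producing a factorization
$$T(y) = S(y) S(y)^T$$
with $S(y)$ a $2\times N$ polynomial matrix whose entries are homogeneous linear forms in $y$; the columns of $S(y)$ are then the desired vectors $A_k y$.

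\textbf{Step 2 (Dichotomy via $\det T$).} Consider $\det T(y) = p(y)r(y)-q(y)^2$, a nonnegative quartic polynomial.

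\emph{Rank-deficient case.} If $\det T \equiv 0$, then $T(y)$ has rank at most one at every $y$, so $T(y) = u(y) u(y)^T$ for some $\mathbb R^2$-valued function $u(y)$. Since each diagonal entry $T_{ii} = u_i^2$ is a degree-two polynomial, $u_i$ must be a linear form in $y$; this gives the desired factorization with $S(y)=u(y)$ and $N=1$. This is the $2\times 2$ analog of the rank-one structure in Lemma~\ref{lem:4.2}.

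\emph{Full-rank case.} If $\det T\not\equiv 0$, write $p(y) = |Ly|^2$ and $r(y)=|My|^2$ via spectral decomposition of the symmetric positive-semidefinite matrices representing $p$ and $r$, where $L,M:\mathbb R^n\to\mathbb R^N$ are linear. Quasiconvexity now reads $|Ly|\,|My|\geq|q(y)|$ pointwise. The goal is to adjust $L,M$ by $y$-independent orthogonal transformations and, if needed, pad with extra zero rows, so that
$$(Ly)\cdot (My) = q(y)\quad\text{identically in }y.$$
Taking $S(y)=\begin{pmatrix}(Ly)^T\\ (My)^T\end{pmatrix}$ then yields $T(y)=S(y)S(y)^T$.

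\textbf{Step 3 (Main obstacle).} The hard part is the full-rank case: producing a polynomial identity $(Ly)\cdot(My)=q(y)$ subject to the constraints $|Ly|^2=p(y)$ and $|My|^2=r(y)$. Pointwise Cauchy--Schwarz ensures existence of vectors $u,v\in\mathbb R^N$ with $u\cdot v = q(y)$ for each individual $y$, but a coherent global polynomial choice is needed. The crucial feature is that $T$ is $2\times 2$: there is only a single off-diagonal scalar quadratic form $q$ to match, and the orthogonal freedom in the $N$-dimensional target space of $L$ and $M$ (for $N$ sufficiently large) provides enough degrees of freedom to solve the resulting matching problem. This use of the $2\times n$ structure is essential; for $k\times n$ matrices with $k\geq 3$ one must match $\binom{k}{2}$ cross terms simultaneously and the corresponding system becomes overdetermined, which is why Terpstra's theorem genuinely fails for $k\geq 3$ and why the example (\ref{1.1}) can exist.
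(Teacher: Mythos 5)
The paper does not prove Theorem~\ref{th:6.1} at all: it is quoted as a classical result of Terpstra with a citation to his 1938 paper, so there is no internal proof to compare against. Judged on its own terms, your attempt correctly reduces the statement to the factorization problem $T(y)=S(y)S(y)^T$ with $S$ linear in $y$ (Step 1 is a standard and correct reformulation of polyconvexity for quadratic forms), but it does not actually prove that factorization in either branch of the dichotomy.

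In the rank-deficient case, the claim that $\det T\equiv 0$ forces $T(y)=u(y)u(y)^T$ with $u_i(y)$ \emph{linear} is false as stated: the diagonal entry $p(y)=u_1(y)^2$ need not be the square of a linear form, e.g.\ $T(y)=\mathrm{diag}(y_1^2+y_2^2,\,0)$ has identically vanishing determinant but $p$ is irreducible over $\mathbb R$. This subcase is still salvageable by a unique-factorization analysis of $p\,r=q^2$ (in the spirit of Lemmas~\ref{lem:4.2} and~\ref{lem:4.3}): one finds either $q=cp$, $r=c^2p$ with $p$ a sum of squares, or $p=l^2$, $q=lm$, $r=m^2$, and in each case $T$ is a sum of rank-one matrices linear in $y$. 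The full-rank case, however, is where the entire content of Terpstra's theorem lives, and Step 3 offers no argument for it. Pointwise Cauchy--Schwarz gives, for each fixed $y$, vectors with the right inner products, but the theorem is precisely the assertion that these can be chosen to depend \emph{linearly} on $y$ so that $(Ly)\cdot(My)=q(y)$ holds as a polynomial identity; saying that ``the orthogonal freedom provides enough degrees of freedom'' restates the goal rather than establishing it. A genuine proof requires a real mechanism here --- for instance an induction on $n$ with a completion-of-the-square reduction, or the classical arguments of Terpstra/Marcellini for $2\times n$ forms --- and without one the proposal is an outline of the problem, not a proof.
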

Let us emphasize that a quadratic form is polyconvex if and only if it is a sum of a convex form and a Null-Lagramgian (which
is a linear combination of all $2\times 2$ minors of the matrix $\xi$), e.g., [\ref{bib:Dac}]. Thus, a quadratic form is polyconvex,
if and only if, the biquadratic form obtained from it by substituting a rank-one matrix $x\otimes y$ in place of $\xi$
is a sum of squares of linear combinations of $x_iy_j.$ Hence, the lemma below is a corollary of
Terprstra's result and the above observation, which will be a key factor in the proof of Theorem~\ref{th:3.6}.
\begin{Lemma}
\label{lem:6.2}
Assume $A(y)$ $B(y)$ and $C(y)$ are quadratic forms in $n$ variables, such that $A(y)$ and $C(y)$ are positive semidefinite
and $A(y)C(y)\geq B^2(y)$ for all $y\in\mathbb R^n.$ Then the $2\times n$ form $f(x,y)=x_1^2A(y)+2x_1x_2B(y)+x_2^2C(y)$
is convex in the variables $\xi_{ij}=x_iy_j.$
\end{Lemma}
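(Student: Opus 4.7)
The plan is to recognize the three scalar hypotheses on $A,B,C$ as a single matrix positivity statement, then invoke Terpstra's theorem to upgrade rank-one convexity to polyconvexity.

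First I would observe that the conditions $A(y)\geq 0$, $C(y)\geq 0$ and $A(y)C(y)\geq B^2(y)$ are jointly equivalent to positive semi-definiteness of the symmetric $2\times 2$ matrix
$$T(y)=\begin{pmatrix}A(y) & B(y)\\ B(y) & C(y)\end{pmatrix}$$
for every $y\in\mathbb{R}^n$. Since $f(x,y)=x^{T}T(y)x$, this is exactly the statement $f(x,y)\geq 0$ on $\mathbb{R}^2\times\mathbb{R}^n$. Interpreting $f$ as the biquadratic obtained from a $2\times n$ quadratic form $\xi\mapsto f(\xi)$ by setting $\xi=x\otimes y$, the inequality $f(x\otimes y)\geq 0$ is precisely the rank-one convexity, hence the quasiconvexity, of that $2\times n$ form.

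The second step is to apply Theorem~\ref{th:6.1} (Terpstra) to conclude that the $2\times n$ form $f(\xi)$ is polyconvex. By the quadratic polyconvexity characterization recalled in the introduction ([\ref{bib:Dac}, Lemma 5.27]), I can then write
$$f(\xi)=g(\xi)+L(\xi),$$
with $g$ a convex quadratic form in the entries $\xi_{ij}$ and $L$ a null-Lagrangian, i.e., a linear combination of the $2\times 2$ minors of $\xi$.

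Finally I would restrict back to rank-one matrices $\xi=x\otimes y$. Every $2\times 2$ minor vanishes on a rank-one matrix, so $L(x\otimes y)\equiv 0$ and therefore $f(x,y)=g(x\otimes y)$. Since $g$ is a positive semi-definite quadratic form in the variables $\xi_{ij}$, this exhibits $f(x,y)$ as convex in $\xi_{ij}=x_iy_j$, equivalently as a sum of squares of linear combinations of the $x_iy_j$ (the reformulation noted just before the lemma). I do not expect a genuine obstacle here; the substantive work is packaged into Terpstra's theorem and the polyconvexity decomposition. The only subtlety worth flagging is that the extension of the biquadratic $f(x,y)$ to a quadratic form on all $2\times n$ matrices is only determined modulo null-Lagrangians, and polyconvexity is precisely what guarantees that a convex representative of this equivalence class exists.
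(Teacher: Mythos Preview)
Your proposal is correct and follows essentially the same route as the paper: verify that the hypotheses make the $y$-matrix $T(y)$ positive semi-definite (hence $f$ is rank-one convex, i.e., quasiconvex), then invoke Terpstra's theorem to obtain polyconvexity, which by the observation preceding the lemma means $f(x,y)$ is a sum of squares of linear combinations of the $x_iy_j$. Your added explanation of why the null-Lagrangian part vanishes on rank-one matrices makes explicit what the paper packages into its ``above observation,'' but the argument is the same.
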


\begin{proof}
By Theorem~\ref{th:6.1}, it is sufficient to check, that the form $f(x,y)$ is quasiconvex, which is trivial, as its $y-$matrix
has nonnegative diagonal terms $A(y)$ and $C(y)$ and a nonnegative determinant $A(y)C(y)-B^2(y)\geq 0.$
\end{proof}
Consider now 2 main cases.\\
\textbf{Case 1: The cofactor matrix $T_{cof}(y)$ has a zero diagonal element.}\\
First of all observe, that if one of the diagonal elements of $T(y)$ is identically zero, then
$f(x,y)$ automatically becomes a $2\times 3$ form and thus by Terprstra's result above, its quasiconvexity implies
that it is polyconvex. Assume now $T(y)$ has second degree polynomial elements on the main diagonal and
that $(T_{cof})_{33}(y)\equiv 0.$ By the positive semi-definiteness of $T_{cof}(y),$ we then have
that $(T_{cof})_{13}(y)=(T_{cof})_{23}(y)\equiv 0.$ The last equalities imply, that the matrix
obtained from $T(y)$ by removing the last row, has a rank at most one, thus by Lemma~\ref{lem:4.2}
the matrix $T(y)$ has either of the forms:
\begin{equation}
\label{6.5}
\begin{bmatrix}
P & \alpha P & \beta P\\
\alpha P & \alpha^2 P & \alpha\beta P\\
\beta P & \alpha\beta P & Q
\end{bmatrix},\quad
 \begin{bmatrix}
P_1^2 & P_1Q_1 & P_1R_1\\
P_1Q_1 & Q_1^2 & Q_1R_1\\
P_1R_1 & Q_1R_1 & S
\end{bmatrix}\quad \alpha,\beta\in\mathbb R,
\end{equation}
where $\mathrm{deg}(P)=\mathrm{deg}(Q)=\mathrm{deg}(S)=2$ and $\mathrm{deg}(P_1)=\mathrm{deg}(Q_1)=\mathrm{deg}(R_1)=1$ and $\alpha\neq 0.$ In the first case
we have, that $P(y)$ is a positive semi-definite quadratic form and
$$0\leq (T_{cof})_{11}(y)=\alpha^2P(y)(Q(y)-\beta^2P(y)),$$
thus
$$Q(y)-\beta^2P(y)\geq 0,$$
which implies, that the form $Q(y)-\beta^2P(y)$ is convex, i.e., it is a sum of squares, therefore
$$Q(y)=\beta^2P(y)+\sum_{i=1}^3a_i^2(y).$$
We then obtain, that
$$f(x,y)=P(y)(x_1+\alpha x_2+\beta x_3)^2+\sum_{i=1}^3a_i^2(y)x_3^2,$$
and $P(y)$ is a convex quadratic form, therefore $f(\xi)$ is polyconvex. In the second case we get similarly, that
$$ S(y)=R_1^2(y)+\sum_{i=1}^3a_i^2(y),$$
therefore
$$f(x,y)=(P_1(y)x_1+Q_1(y)x_2+R_1(y)x_3)^2+\sum_{i=1}^3a_i^2(y)x_3^2,$$
and thus is polyconvex.\\
\textbf{Case 2: All diagonal elements of the cofactor matrix $T_{cof}(y)$ are nonzero.}\\
In that case we have denoting $T(y)=(t_{ij}(y))_{i,j=1}^3$ that, since $\mathrm{det}T(y)\equiv 0,$
then the last row of $T(y)$ is a linear combination of the first two, which means that
$T(y)$ has the form of $G(y)$ shown in (\ref{6.1}), hence we get
\begin{equation}
\label{6.6} T(y)=
\begin{bmatrix}
t_{11} & t_{12} & at_{11}+bt_{12} \\
t_{12} & t_{22} & at_{12}+bt_{22} \\
at_{11}+bt_{12} & at_{12}+bt_{22} & a^2t_{11}+b^2t_{22}+2abt_{12}
\end{bmatrix},
\end{equation}
where
\begin{equation}
\label{6.7}
a(y)=-\frac{(T_{cof}(y))_{13}}{(T_{cof}(y))_{33}}\qquad\text{and}\qquad b(y)=-\frac{(T_{cof}(y))_{23}}{(T_{cof}(y))_{33}}.
\end{equation}
On the other hand the matrix $T(y)$ satisfies the requirements of Lemma~\ref{lem:4.3}, thus
the cofactor matrix $T_{cof}(y)$ has one of the forms
\begin{equation}
\label{6.8}
\begin{bmatrix}
P & \alpha P & \beta P\\
\alpha P & \alpha^2 P & \alpha\beta P\\
\beta P & \alpha\beta P & \beta^2P
\end{bmatrix},\quad
 \begin{bmatrix}
P_1^2 & P_1Q_1 & P_1R_1\\
P_1Q_1 & Q_1^2 & Q_1R_1\\
P_1R_1 & Q_1R_1 & R_1^2
\end{bmatrix},\qquad
\begin{bmatrix}
l_1^2S & l_1l_2S & l_1l_3S\\
l_1l_2S & l_2^2S & l_2l_3S\\
l_1l_3S & l_2l_3S & l_3^2S
\end{bmatrix},\quad \alpha,\beta\in\mathbb R.
\end{equation}
Consider now each case separately.\\
\textbf{Case 2a: The cofactor matrix $T_{cof}(y)$ has the first form shown in (\ref{6.8}).}\\
In the first case we have $a(y)=-\frac{1}{\beta}=a\in\mathbb R$ and $b(y)=-\frac{\alpha}{\beta}=b\in\mathbb R,$ thus
we get
\begin{equation}
\label{6.9}
f(x,y)=t_{11}(y)(x_1+ax_3)^2+2t_{12}(y)(x_1+ax_3)(x_2+bx_3)+t_{22}(y)(x_2+bx_3)^2.
\end{equation}
It is clear, that $t_{11},t_{22}\geq 0$ and $t_{11}t_{22}\geq t_{12}^2$ which is a consequence of the quasiconvexity of the form
$f(x,y)$ for $x_3=0,$ thus denoting $X_1=x_1+ax_3$ and $X_2=x_2+bx_3,$ we get by Lemma~\ref{lem:6.2}, that the form
\begin{equation*}
f(x,y)=t_{11}(y)X_1^2+2t_{12}(y)X_1X_2+t_{22}(y)X_2^2
\end{equation*}
is convex in the variables $X_iy_j$, and thus the form $f(\xi)$ is polyconvex in the variables $\xi_{ij}=x_iy_j,$ which completes the proof for the first case.\\
\textbf{Case 2b: The cofactor matrix $T_{cof}(y)$ has the third form shown in (\ref{6.8}).}\\
In this case we have
\begin{equation}
\label{6.10}
f(x,y)=\frac{1}{l_3^2}\left[t_{11}(x_1l_3-x_3l_1)^2+2t_{12}(x_1l_3-x_3l_1)(x_2l_3-x_3l_2)+t_{22}(x_2l_3-x_3l_2)^2\right],
\end{equation}
where $l_1,l_2,l_3\neq 0.$ Let us write the conditions that ensure, that $f(x,y)$ is a
polynomial in $x$ and $y.$ We have, that the coefficients of $x_1x_3$ and $x_2x_3$ and
are quadratic forms in $y,$ which yields the following conditions:
\begin{equation}
\label{6.11}
(t_{11}l_1+t_{12}l_2)\ \vdots\ l_3,\qquad (t_{12}l_1+t_{22}l_2)\ \vdots\ l_3.
\end{equation}
If now the linear forms $l_1$ and $l_2$ are divisible by $l_3,$ i.e., $l_1, l_2 \ \vdots\ l_3$, then we have $l_1=\alpha l_3$ and $l_2=\beta l_3,$ thus
the cofactor matrix will have the first form shown in (\ref{6.8}), which yields the polyconvexity of $f.$ If $l_1$ is divisible by $l_3$ and
$l_2$ is not, then we get from (\ref{6.11}), that $t_{12}\ \vdots\ l_3$ and $t_{22}\ \vdots\ l_3,$ thus from the positive semi-definiteness of $t_{22}$ we get
\begin{equation}
\label{6.12}
t_{22}=\alpha^2l_3^2,\qquad t_{12}=ll_{3},
\end{equation}
where $l=l(y)$ is a linear form in $l_3.$ It is clear, that $\alpha>0,$ thus we obtain from the condition $t_{11}t_{22}-t_{12}^2\geq 0$, that
\begin{equation}
\label{6.13}
t_{11}\geq \frac{l^2}{\alpha^2}.
\end{equation}
The condition (\ref{6.13}) implies the the quadratic form $t_{11}-\frac{l^2}{\alpha^2}$ is convex, thus it can be written as a sum of squares, i.e.,
\begin{equation}
\label{6.14}
t_{11}=\frac{l^2}{\alpha^2}+\sum_{i=1}^3a_i^2.
\end{equation}
Putting now $l_1=al_3$ we get from (\ref{6.10}),
\begin{equation}
\label{6.15}
f(x,y)=\left[\frac{l(x_1-ax_3)}{\alpha}+\alpha(l_3x_2-l_2x_3)\right]^2+\sum_{i=1}^3[a_i(x_1-ax_3)]^2,
\end{equation}
which implies the polyconvexity of $f.$ The last and the most tricky subcase here is when both $l_1$ and $l_2$ are not divisible by $l_3.$
Assume, that the form $f$ is not an extremal. We aim then to prove that it must be polyconvex. Assume the form
$G(x,y)=f(x,y)-\left(\sum_{i=1}^3x_is_i(y)\right)^2$ is quasiconvex. Then from the proof of Thereom~\ref{th:3.5} we have, that
$\mathrm{det}(T(y))=S(l_1s_1+l_2s_2+l_3s_3)^2,$ where $T(y)$ is the acoustic matrix of $f(x,y).$ Therefore, we obtain the condition
\begin{equation}
\label{6.16}
l_1s_1+l_2s_2+l_3s_3\equiv 0.
\end{equation}
We again recall Treprstra's result (Lemma~\ref{lem:6.2}) for the quadratic forms $A(y)=t_{11}(y),$ $C(y)=t_{22}(y)$ and $B(y)=t_{12}(y),$ which gives
that the form $t_{11}(y)z_1^2-2t_{12}(y)z_1z_2+t_{22}(y)z_2^2$ must be convex in the variables $\xi_{ij}=z_iy_j,$ thus it is a sum of squares, i.e.,
$$t_{11}(y)z_1^2-2t_{12}(y)z_1z_2+t_{22}(y)z_2^2=\sum_{i=1}^3(a_i(y)z_1-b_i(y)z_2)^2,$$
where $a_i(y)$ and $b_i(y)$ are linear forms in $y.$ Therefore, the last equality implies
\begin{equation}
\label{6.17}
t_{11}(y)=\sum_{i=1}^3a_i^2(y),\quad t_{22}(y)=\sum_{i=1}^3b_i^2(y),\quad t_{12}(y)=\sum_{i=1}^3a_i(y)b_i(y).
\end{equation}
Next we obtain some divisibility conditions. First we have, that since the form $G(x,y)=f(x,y)-\left(\sum_{i=1}^3x_is_i(y)\right)^2$ is quasiconvex thus its $33$ cofactor element must be nonnegative, i.e.,
$$(t_{11}-s_1^2)(t_{22}-s_2^2)-(t_{12}-s_1s_2)^2\geq 0,$$
which gives
$$t_{11}s_2^2+t_{22}s_1^2-2t_{12}s_1s_2\leq t_{11}t_{22}-t_{12}^2,$$
which can be rewritten as follows:
\begin{equation}
\label{6.18}
\sum_{i=1}^3(a_is_2-b_is_1)^2\leq l_3^2S.
\end{equation}
The last inequality implies, that if $l_3=0,$ then $a_is_2-b_is_1=0$ for $i=1,2,3,$ thus since $l_3$ is a linear form, we discover
\begin{equation}
\label{6.19}
(a_is_2-b_is_1)\ \vdots \ l_3,\quad i=1,2,3.
\end{equation}
Next, we have from (\ref{6.10}) and from (\ref{6.17}) the following representation of $f(x,y):$
\begin{equation}
\label{6.20}
f(x,y)=\sum_{i=1}^3\left[a_ix_1+b_ix_2-\frac{1}{l_3}(a_il_1+b_il_2)x_3\right]^2,
\end{equation}
where we aim to show, that $(a_il_1+b_il_2)\ \vdots \ l_3$ for $i=1,2,3,$ which will evidently ensure the polyconvexity of $f.$
From (\ref{6.19}) we have $(l_2a_is_2-l_2b_is_1)\ \vdots \ l_3$ and from (\ref{6.16}) we have $(l_2s_2+l_1s_1)\ \vdots \ l_3,$ thus we get from the last two
divisibility conditions, that $s_1(a_il_1+b_il_2)\ \vdots \ l_3$ for $i=1,2,3,$ hence, if $s_1$ is not divisible by $l_3,$ then we are done.
If now $s_1\ \vdots \ l_3,$ then we get from (\ref{6.16}), that $s_2\ \vdots \ l_3$ too. Similarly, as none of the forms $l_1$ and $l_2$ is zero,
we get the following set of divisibility conditions:
\begin{equation}
\label{6.21}
s_i\ \vdots \ l_j,\quad\text{if}\quad i\neq j,
\end{equation}
Note, that in the case when two of $l_i$ and $l_j$ are linearly dependent, we are done as shown in (\ref{6.11})-(\ref{6.15}).
The last observation then contradicts the conditions (\ref{6.21}) unless $s_1=s_2=s_3\equiv 0,$ which itself is a contradiction as we are assuming
the subtracted form $\left(\sum_{i=1}^3x_is_i(y)\right)^2$ is not zero. The proof of \textbf{Case 2b} is finished now.

\textbf{Case 2c: The cofactor matrix $T_{cof}(y)$ has the second form shown in (\ref{6.8}).}\\
We again prove, that if $f(\xi)$ is not an extremal, then it must be polyconvex. Assume $f(\xi)$ is not an extremal, thus
the biquadratic form $g(x,y)=f(x,y)-(x_1s_1(y)+x_2s_2(y)+x_3s_3(y))^2$ is nonnegative, where at least one of the linear forms
$s_1(y), s_1(y)$ and $s_3(y)$ is nonzero. Invoking the idea in the proof of Theorem~\ref{th:3.4}, we consider the bilinear form
$$f_h(x,y)=f(x,y)-h(x_1s_1(y)+x_2s_2(y)+x_3s_3(y))^2,$$
for all $h\in[0,1].$ It is then clear, that $f_h(\xi)$ is quasiconvex for all $h\in[0,1].$ On the other hand we have from (\ref{5.2}), that
$$0=\mathrm{det}(T(y))\geq \mathrm{det}(T_h(y))\geq 0,$$ thus we get
\begin{equation}
\label{6.23}
\mathrm{det}(T_h(y))=0,\quad\text{for all}\quad h\in[0,1],
\end{equation}
therefore the cofactor matrix $\mathrm{cof}(T_h(y))$ must have one of the forms shown in (\ref{6.8})
for all $h\in[0,1]$. If for a value $h_0\in(0,1)$ the matrix $\mathrm{cof}(T_{h_0}(y))$ has the first form shown in (\ref{6.8}), then by
\textbf{Case 2a} the form $f_{h_0}(x,y)$ is polyconvex and so is the form $f(x,y)=f_{h_0}(x,y)+h_0(x_1s_1(y)+x_2s_2(y)+x_3s_3(y))^2$.
If for a value $h_0\in(0,1)$ the matrix $\mathrm{cof}(T_{h_0}(y))$ has the third form shown in (\ref{6.8}), then by
\textbf{Case 2b} the form $f_{h_0}(x,y)$ is polyconvex as it is not an extremal, and so is the form $f(x,y)=f_{h_0}(x,y)+h_0(x_1s_1(y)+x_2s_2(y)+x_3s_3(y))^2$. We assume then, that the matrix $\mathrm{cof}(T_{h}(y))$ has the second form shown in (\ref{6.8}) for all $h\in[0,1],$ i.e.,
\begin{equation}
\label{6.24}\mathrm{cof}(T_{h}(y))=
\begin{bmatrix}
P_h^2 & P_hQ_h & P_hR_h\\
P_hQ_h & Q_h^2 & Q_hR_h\\
P_hR_h & Q_hR_h & R_h^2
\end{bmatrix},\quad\text{for all}\quad h\in[0,1),
\end{equation}
where we have moreover,
\begin{align}
\label{6.25}
P_h^2&=P^2-h(s_2^2t_{33}+s_3^2t_{22}-2s_2s_3t_{23}),\\ \nonumber
Q_h^2&=Q^2-h(s_1^2t_{33}+s_3^2t_{11}-2s_1s_3t_{13}),\\ \nonumber
R_h^2&=R^2-h(s_1^2t_{22}+s_2^2t_{11}-2s_1s_2t_{12}).
\end{align}
Applying now Lemma~\ref{lem:4.5} to each of the identities in (\ref{6.25}), we get, that
\begin{align}
\label{6.26}
s_3^2t_{22}+s_2^2t_{33}-2s_2s_3t_{23}&=\alpha^2P^2=\alpha^2(t_{22}t_{33}-t_{23}^2),\\ \nonumber
s_1^2t_{33}+s_3^2t_{11}-2s_1s_3t_{13}&=\beta^2Q^2=\beta^2(t_{11}t_{33}-t_{13}^2),\\ \nonumber
s_1^2t_{22}+s_2^2t_{11}-2s_1s_2t_{12}&=\gamma^2R^2=\gamma^2(t_{11}t_{22}-t_{12}^2).
\end{align}
Next, we consider the following subcases:\\
\textbf{Case 2ca: $\gamma=0$ and $s_1\neq 0.$} In this case we have, that if $s_2\equiv 0,$ then $s_1^2t_{22}\equiv 0,$
thus $t_{22}\equiv 0,$ thus by the positivity $(T_{cof})_{33}(y)=t_{11}t_{22}-t_{12}^2=-t_{12}^2\geq 0,$ we obtain
 $(T_{cof})_{33}(y)\equiv 0$, which puts us in \textbf{Case 1,} thus $f(\xi)$ is polyconvex. If now $s_2\neq 0,$ then we get
from the last equality in (\ref{6.26}), that
$$s_1^2t_{22}+s_2^2t_{11}-2s_1s_2t_{12}\equiv 0,$$
which gives $2s_1s_2t_{12}=s_1^2t_{22}+s_2^2t_{11}.$ By the geometric and arithmetic mean inequality we have
$$4s_1^2s_2^2t_{12}^2=(s_1^2t_{22}+s_2^2t_{11})^2\geq 4s_1^2s_2^2t_{11}t_{22},$$ from where we obtain
$t_{11}t_{22}-t_{12}^2\leq 0,$ and thus by positivity $(T_{cof})_{33}(y)=t_{11}t_{22}-t_{12}^2\equiv 0$, which again
brings us beck to the situation in \textbf{Case 1,} and hence $f(\xi)$ is polyconvex. Next, observe, that
since the form $x_1s_1(y)+x_2s_2(y)+x_3s_3(y)$ is not identically zero, then we can assume, that $s_1\neq 0,$ therefore the cases $\gamma=0$ or $\beta=0$ are done. We can then assume, that $\beta,\gamma\neq 0.$ The next case is the following:\\
\textbf{Case 2cb: $\alpha=0.$} We have already understood, that if in this case $s_2\neq 0$ or $s_3\neq 0,$ then we are done. Assume now, that
$s_2=s_3\equiv 0.$ The formula (\ref{5.3}) and the identity (\ref{6.23}) imply
\begin{equation}
\label{6.27}
\sum_{i,j=1}^3s_i(y)s_j(y)(T_{cof})_{ij}(y)=\mathrm{det}(T_1(y))=0,
\end{equation}
which gives $s_1^2(y)(T_{cof})_{11}(y)\equiv 0,$ hence $(T_{cof})_{11}(y)\equiv 0,$ which is again \textbf{Case 1,} and hence $f(\xi)$ is polyconvex.
We can now assume, that we are in the following situation:\\
\textbf{Case 2cc: $\alpha\beta\gamma\neq 0$ and $s_1\neq 0.$} If $s_2=s_3\equiv 0,$ then we get from (\ref{6.27}) and the following observation, that
$f(\xi)$ is polyconvex. Assume, now $s_2\neq 0.$ A straightforward manipulation transfers the equalities in (\ref{6.26}) to the following form:
\begin{align}
\label{6.28}
\left(t_{11}-\frac{s_1^2}{\gamma^2}\right)\left(t_{22}-\frac{s_2^2}{\gamma^2}\right)=\left(t_{12}-\frac{s_1s_2}{\gamma^2}\right)^2,\\ \nonumber
\left(t_{11}-\frac{s_1^2}{\beta^2}\right)\left(t_{33}-\frac{s_3^2}{\beta^2}\right)=\left(t_{13}-\frac{s_1s_3}{\beta^2}\right)^2,\\ \nonumber
\left(t_{22}-\frac{s_2^2}{\alpha^2}\right)\left(t_{33}-\frac{s_2^2}{\alpha^2}\right)=\left(t_{23}-\frac{s_2s_3}{\alpha^2}\right)^2.
\end{align}
Observe, that from the first equality of (\ref{6.28}) one must have one of the cases:
$$t_{11}-\frac{s_1^2}{\gamma^2}=S,\quad \quad t_{22}-\frac{s_2^2}{\gamma^2}=\lambda^2S,\quad t_{12}-\frac{s_1s_2}{\gamma^2}=\lambda S,$$
where $S$ is a quadratic form and $\lambda\in\mathbb R,$ or
$$t_{11}-\frac{s_1^2}{\gamma^2}=\delta u^2,\quad t_{22}-\frac{s_2^2}{\gamma^2}=\delta v^2,\quad t_{12}-\frac{s_1s_2}{\gamma^2}=\delta uv,$$
where $\delta^2=1.$
In the first case we have by direct calculation, that
$$t_{11}t_{22}-t_{12}^2=S\frac{(s_2-\lambda s_1)^2}{\gamma^2}=R^2,$$
thus one must have $S=s^2,$ for some linear form $s.$ Therefore, we have in all cases,
$$
t_{11}=\frac{s_1^2}{\gamma^2}+\delta u^2,\quad t_{22}=\frac{s_2^2}{\gamma^2}+\delta v^2,\quad t_{12}=\frac{s_1s_2}{\gamma^2}+\delta uv,
$$
where $\delta=\pm 1.$ We have on the other hand, that
$$t_{11}t_{22}-t_{12}^2=\frac{\delta}{\gamma^2}(s_1v-s_2u)^2\geq 0,$$
from where, and from the fact that the cofactor element $t_{11}t_{22}-t_{12}^2$ is not identically zero, we get $\delta=1.$ Concluding, we get
the following representation of $t_{11}, t_{22}$ and $t_{12}:$
\begin{equation}
\label{6.29}
t_{11}=\frac{s_1^2}{\gamma^2}+u^2,\quad t_{22}=\frac{s_2^2}{\gamma^2}+v^2,\quad t_{22}=\frac{s_1s_2}{\gamma^2}+uv.
\end{equation}
It is also clear, that all 6 multipliers on the left hand side of equation (\ref{6.28}) satisfy similar identities. Denote
$\lambda=\frac{1}{\max(\alpha, \beta, \gamma)}$ and consider the form
$$f_\lambda(x,y)=f(x,y)-\lambda(x_1s_1(y)+x_2s_2(y)+x_3s_3(y))^2.$$ We aim to prove, that the biquadratic form $f_\lambda(x,y)$ is nonnegative.
To that end, note first of all, that the equation (\ref{6.27}) and the formula (\ref{5.3}) for the determinant of $f_\lambda$ imply, that
\begin{equation}
\label{6.30}
\mathrm{det}(T_\lambda(y))=0.
\end{equation}
Second, note, that (\ref{6.25}) and (\ref{6.26}) imply, that
$$P_h^2=P^2(1-h\alpha)\geq0, \quad Q_h^2=Q^2(1-h\beta)\geq0, \quad Q_h^2=Q^2(1-h\gamma)\geq0,\quad\text{for all}\quad h\in[0,1],$$
from where we get
\begin{equation}
\label{6.31}
0<\alpha,\beta,\gamma\leq 1,\quad\text{and}\quad \lambda\geq 1.
\end{equation}
Next, the diagonal elements of the matrix $T_\lambda(y)$ are $t_{11}-\lambda s_1^2,$ $t_{22}-\lambda s_2^2$ and  $t_{33}-\lambda s_3^2,$ respectively,
which are nonnegative due to the conditions (\ref{6.29}), (\ref{6.31}) and the definition of $\lambda.$
Again, by (\ref{6.25}) and (\ref{6.26}), the principal minors of $T_\lambda(y)$ are $P^2(1-\lambda\alpha)$, $Q^2(1-\lambda\beta)$ and
$=Q^2(1-\lambda\gamma)$, which are obviously nonnegative. Concluding, we obtain that the form $f_\lambda$ is quasiconvex. Moreover one of the
diagonal elements of the cofactor matrix $\mathrm{cof}(T_\lambda(y))$ is identically zero, which again goes back to the \textbf{Case 1} for $f_\lambda$ implying, that $f_\lambda$ is polyconvex and therefore so is the form $f(x,y)=f_\lambda(x,y)+\lambda(x_1s_1(y)+x_2s_2(y)+x_3s_3(y))^2.$
The proof of the theorem is finished now.

\end{proof}

\begin{proof}[Proof of Theorem~\ref{th:3.7}]. To begin with, let us assume that $f(\xi)$ is not an extremal.
Recall that then the analysis in the beginning of Section~\ref{sec:5} goes through, thus we have the following identities
\begin{equation}
\label{6.32}
f(x,y)=g(x,y)+(x_1s_1(y)+x_2s_2(y)+x_3ks_3(y))^2,
\end{equation}
and
\begin{equation}
\label{6.33}
\mathrm{det}(G(y))\equiv 0,
\end{equation}
where $G(y)$ is the $y-$matrix of the form $g(x,y)$ and $s_i(y),$ $i=1,2,3,$ are linear
forms in $y.$ We have seen furthermore in (\ref{6.1})-(\ref{6.5}), that
the diagonal terms of the cofactor matrix $G_{cof}(y)$ are nonnegative. Consider now three different cases:\\
\textbf{Case 1. One of the diagonal elements of $G(y)$ is positive semi-definite.}\\
 In this case we have by the Sylvester's criterion, that the matrix $G(y)$ is positive semi-definite, thus the form $g(x,y)$ is quasiconvex with an
 identically zero determinant of its acoustic matrix, therefore Theorem~\ref{th:3.6} immediately implies, that the form $g(\xi)$ is a sum of a
 rank-one form and an extremal that has an identically zero determinant.\\
\textbf{Case 2. One of the diagonal elements of $G(y)$ is indefinite.}\\
We can without loss of generality assume, that the element $g_{11}(y)$ is indefinite. Let us show, that $g_{11}(y)=0$ implies $g_{22}(y)=0$ and
$g_{33}(y)=0.$ Indeed, assume by contradiction, that $g_{22}(y^0)>0$ and $g_{11}(y^0)=0,$ for some $y^0\in\mathbb R^3.$ Then
by continuity, $g_{22}(y)>0$ in a neighbourhood $U$ of the point $y^0$ and by Lemma~\ref{lem:4.4}, we have $g_{11}(y^1)<0,$ for some point $y^1\in U,$
thus $g_{11}(y^1)g_{22}(y^1)<0,$ which contradicts the inequality $(G_{cof}(y^1))_{33}=g_{11}(y^1)g_{22}(y^1)-g_{12}^2(y^1)\geq 0.$ The proof
of the case $g_{11}(y^1)>0$ is analogous, as Lemma~\ref{lem:4.4} provides both positive and negative values of $g_{11}.$ Next we apply the following result of
Marcellini, [\ref{bib:Marcellini}, Corollary 1]: the original formulation is below.
\begin{Theorem}[Marcellini]
\label{th:6.3}
Let $f$ and $g$ be two quadratic forms in $\mathbb R^n,$ with $g$ indefinite. If $f(\xi)=0$ for every $\xi$ such that $g(\xi)=0,$
then there exists $\lambda\in\mathbb R$ such that $f=\lambda g.$
\end{Theorem}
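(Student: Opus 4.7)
The plan is to exploit the indefiniteness of $g$ to produce many real lines on which $g$ vanishes at two distinct points, and then to use the hypothesis $Z(g)\subset Z(f)$ at these pairs to force a polynomial identity between $f$ and $g$. Let $B_f$ and $B_g$ denote the polar (symmetric bilinear) forms of $f$ and $g$, so that $f(\xi+t\eta)=f(\xi)+2tB_f(\xi,\eta)+t^2f(\eta)$, and similarly for $g$.

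Since $g$ is indefinite I can pick $\xi,\eta\in\mathbb{R}^n$ with $g(\xi)>0$ and $g(\eta)<0$. The quadratic $t\mapsto g(\xi+t\eta)$ then has discriminant $B_g(\xi,\eta)^2-g(\xi)g(\eta)>0$, hence two distinct real roots $t_1\neq t_2$ at which $f$ must also vanish. Subtracting the two expanded equations $f(\xi+t_i\eta)=0$ and cancelling $t_1-t_2$ gives $2B_f(\xi,\eta)+(t_1+t_2)f(\eta)=0$, and the Vieta relation $t_1+t_2=-2B_g(\xi,\eta)/g(\eta)$ yields the key identity
\begin{equation*}
B_f(\xi,\eta)\,g(\eta)=B_g(\xi,\eta)\,f(\eta),
\end{equation*}
initially on the open wedge $\{g(\xi)g(\eta)<0\}$ and then, being a polynomial identity in $(\xi,\eta)$, on all of $\mathbb{R}^n\times\mathbb{R}^n$. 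The symmetric argument produces $B_f(\xi,\eta)\,g(\xi)=B_g(\xi,\eta)\,f(\xi)$, and subtracting the two gives $B_g(\xi,\eta)\bigl(f(\xi)g(\eta)-f(\eta)g(\xi)\bigr)\equiv 0$.

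To conclude, fix any $\eta_0$ with $g(\eta_0)\neq 0$; then $\xi\mapsto B_g(\xi,\eta_0)$ is a nonzero linear functional, since $B_g(\eta_0,\eta_0)=g(\eta_0)\neq 0$. On the dense open set where $B_g(\xi,\eta_0)\neq 0$ and $g(\xi)\neq 0$ simultaneously, one reads off $f(\xi)/g(\xi)=f(\eta_0)/g(\eta_0)=:\lambda$, so $f-\lambda g$ vanishes on a dense subset of $\mathbb{R}^n$ and hence identically.

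The step I expect to be the main obstacle is the polynomial continuation of the key identity, especially when $g$ is rank-deficient so that $B_g(\cdot,\eta_0)$ has a large kernel and the set $\{g\neq 0\}$ is proper. However, because everything in sight is a polynomial identity in $2n$ variables valid on a nonempty (hence Zariski-dense) real-open set, and because the excluded locus in the final density step is always a proper real-algebraic subset of $\mathbb{R}^n$ of measure zero, the argument goes through without modification.
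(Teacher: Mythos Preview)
Your argument is correct. The paper does not supply its own proof of this statement; it simply quotes Marcellini's result and applies it, so there is nothing to compare against on the level of technique. Your route via the polar forms and Vieta's relations on the pencil $\xi+t\eta$ is a clean, self-contained derivation.

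One small wording issue: the claimed identity $B_g(\xi,\eta)\bigl(f(\xi)g(\eta)-f(\eta)g(\xi)\bigr)\equiv 0$ does not follow from merely \emph{subtracting} the two relations $B_f(\xi,\eta)g(\eta)=B_g(\xi,\eta)f(\eta)$ and $B_f(\xi,\eta)g(\xi)=B_g(\xi,\eta)f(\xi)$; literal subtraction gives $B_f(\xi,\eta)(g(\eta)-g(\xi))=B_g(\xi,\eta)(f(\eta)-f(\xi))$. What you want is to multiply the first by $g(\xi)$, the second by $g(\eta)$, and then subtract to eliminate $B_f$. Equivalently, from the first identity with $\eta=\eta_0$ you get $B_f(\xi,\eta_0)=\lambda\,B_g(\xi,\eta_0)$ with $\lambda=f(\eta_0)/g(\eta_0)$, and substituting this into the second identity yields $B_g(\xi,\eta_0)\bigl(f(\xi)-\lambda g(\xi)\bigr)=0$ directly, bypassing the intermediate step altogether. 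Either way the conclusion stands, and your density argument at the end is fine since $\{B_g(\cdot,\eta_0)=0\}$ is a hyperplane.
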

Now, due to the above theorem and the nonnegativity of the diagonal elements of the cofactor matrix, we get that the exist constants $\lambda,\mu\geq0,$
such, that
\begin{equation}
\label{6.34}
g_{22}(y)=\lambda^2g_{11}(y),\qquad g_{33}(y)=\mu^2g_{11}(y).
\end{equation}
Note, that the case $\lambda=0$ or $\mu=0$ reduces to the first case, thus we can assume, that $\lambda,\mu>0.$
The inequality
$$(G_{cof}(y))_{33}=g_{11}(y)g_{22}(y)-g_{12}^2(y)\geq 0,$$
implies, that if $g_{11}(y)=0$ then $g_{12}(y)=0$, thus we again get from the indefiniteness of $g_{11}(y)$, that $g_{12}(y)=\alpha g_{11}(y).$ Similarly,
we get $g_{13}(y)=\beta g_{11}(y)$ and $g_{23}(y)=\gamma g_{11}(y),$ thus the matrix $G(y)$ has the form
\begin{equation}
\label{6.35}
G(y)=g_{11}(y)
\begin{bmatrix}
1 & \alpha & \beta \\
\alpha & \lambda^2 & \gamma\\
\beta & \gamma & \mu^2
\end{bmatrix}=g_{11}(y)\cdot A.
\end{equation}
Again, the condition
$$0\leq (G_{cof}(y))_{33}=g_{11}(y)g_{22}(y)-g_{12}^2(y)=g_{11}^2(y)(\lambda^2-\alpha^2),$$
and the indefiniteness of $g_{11}(y)$ imply
\begin{equation}
\label{6.36}
\lambda^2\geq \alpha^2.
\end{equation}
The condition
$$0=\mathrm{det}(G(y))=g_{11}^3(y)\mathrm{det}(A)$$
and the indefiniteness of $g_{11}(y)$ imply
\begin{equation}
\label{6.37}
\mathrm{det}(A)=0.
\end{equation}
Combining conditions (\ref{6.36}) and (\ref{6.37}), we get, that the matrix $A$ is positive semi-definite by Sylvester's criterion,
and we have the following representation of $f,$
\begin{equation}
\label{6.38}
f(x,y)=g_{11}(y)xAx^T+(x_1s_1(y)+x_2s_2(y)+x_3s_3(y))^2.
\end{equation}
If one of the linear forms $s_i(y)$, say $s_2(y)$ is identically zero, then $f(x,y)$ will become negative for some $y$ that makes $g_{11}(y)$ negative
and for $x=(0,x_2,0)$ with big enough values of $x_2,$ thus the linear forms $s_i(y)$ are nonzero. We aim the to show, that the bilinear form
$x_1s_1(y)+x_2s_2(y)+x_3s_3(y)$ separates in the $x$ and $y$ variables, i.e., each $s_i(y)$ is a scalar multiple of $s_1(y).$
Indeed, first of all it is clear that the solution of the linear equation $s_1(y)=0$
is a proper subspace of $R^3,$ thus has a zero measure. Therefore, by Lemma~\ref{lem:4.4}, we can choose $y^0$ such that $g_{11}(y^0)<0$ and $s_1(y^0)\neq 0.$
By continuity, there exists $\epsilon>0,$ such that
\begin{align}
\label{6.39}
g_{11}(y^0)&<0\quad\text{if}\quad y\in B_{\epsilon}(y^0)\\ \nonumber
s_1(y^0)&\neq 0\quad\text{if}\quad y\in B_{\epsilon}(y^0),
\end{align}
where $B_{\epsilon}(y^0)\in\mathbb R^3$ is the ball centered at $y^0$ and with radius $\epsilon.$ Next, we choose
$$x_1=-x_2\frac{s_2(y)}{l_s(y)}-x_3\frac{s_3(y)}{s_1(y)},$$
that makes the bilinear form $x_1s_1(y)+x_2s_2(y)+x_3s_3(y)$ vanish. Due to the quasiconvexity of $f,$ the condition $g_{11}(y)<0$ and the positive semi-definiteness of the matrix $A,$ we must have $xAx^T=0$ as long as $x_1=-x_2\frac{s_2(y)}{s_1(y)}-x_3\frac{s_3(y)}{s_1(y)}.$ We aim now to prove,
that the latter is possible only, if $s_2$ and $s_3$ are constant multiples of $s_1.$ The solution of $xAx^T=0$ is a proper subspace of $\mathbb R^3,$
thus if we assume, that $\frac{s_2(y)}{s_1(y)}$ is not constant, then the solution of $\frac{s_2(y)}{s_1(y)}=c$ must be a proper subspace of $R^3$ for any
$c\in\mathbb R,$ thus if $y$ runs over the ball $B_{\epsilon}(y^0),$ the ratio $\frac{s_2(y)}{s_1(y)}$ takes infinitely many values as the ball has a positive measure and therefore the solution $x$ of $x_1=-x_2\frac{s_2(y)}{s_1(y)}-x_3\frac{s_3(y)}{s_1(y)}$ when $y$ runs over $B_{\epsilon}(y^0),$
can not be contained in a proper subspace of $R^3.$ Therefore we have $s_2(y)=as_1(y)$ and $s_3(y)=bs_1(y),$ and hence we arrive at the formula
\begin{equation}
\label{6.40}
f(x,y)=g_{11}(y)xAx^T+s_1^2(y)(x_1+ax_2+bx_3)^2.
\end{equation}
It is now clear, that if $x_1+ax_2+bx_3=0$, then $xAx^T=0$, thus since $x_1+ax_2+bx_3$ is linear, is must divide the quadratic form $xAx^T$, therefore
from the positivity of $xAx^T$ we get $xAx^T=c(x_1+ax_2+bx_3)^2,$ which then yields
\begin{equation}
\label{6.41}
f(x,y)=(s_1^2(y)+cg_{11}(y))(x_1+ax_2+bx_3)^2.
\end{equation}
Finally, the positivity of $f(x,y)$ implies the positivity of the quadratic form $s_1^2(y)+cg_{11}(y)$ and thus the polyconvexity of $f(\xi).$\\
\textbf{Case 3. All diagonal elements of $G(y)$ are negative semi-definite.}\\
Denote $H(y)=-G(y),$ thus $H(y)$ is positive semi-definite and
\begin{equation}
\label{6.42}
f(x,y)=(x_1s_1(y)+x_2s_2(y)+x_3s_3(y))^2-xH(y)x^T.
\end{equation}
Observe, that if $s_1(y)=0$ for some $y\in\mathbb R^3,$ then choosing $x=(x_1,0,0)$ we have
$$f(x,y)=-h_{11}(y)x_1^2\geq 0,$$
thus $h_{11}(y)=0$, which implies, that $h_{11}(y)$ is divisible by $s_1(y)$ and by the positivity of $h_{11}(y),$ we get
$h_{11}(y)=a_{11}s_1^2(y).$ Similarly we have $h_{22}(y)=a_{22}s_2^2(y)$ and $h_{33}(y)=a_{33}s_3^2(y),$ where $a_{11},a_{22},a_{33}\geq 0.$
Next, we have from the inequality $h_{ii}(y)h_{jj}(y)\leq h_{ij}^2(y)$ thus $h_{ij}(y)=a_{ij}s_i(y)s_j(y)$ and thus the form $xH(y)x^T$
depends only on the variables $z_1=x_1s_1(y), z_2=x_2s_2(y), z_3=x_3s_3(y),$ which gives
\begin{equation}
\label{6.43}
f(x,y)=zAz^T,
\end{equation}
for some matrix $A.$ If $s_1\equiv 0,$ then the form $xH(y)x^T$ does not depend on $x_1$ and thus
we end up with a $2\times 3$ quasiconvex form, therefore by Theorem~\ref{th:6.1} it will be polyconvex. If none of the linear forms
$s_i(y)$ is identically zero, then we can choose $y\in\mathbb R^3$ such that $s_i(y)\neq 0,$ and thus when the variable $x$ runs over
$\mathbb R^3,$ then the variable $z$ takes all values in $\mathbb R^3,$ hence the inequality $f(x,y)=zAz^T\geq 0$ implies that $A$ is positive semi-definite.
Concluding, we obtain, that $f(\xi)$ is polyconvex then. The proof is finished now.
\end{proof}

\section*{Acknowledgements}
 The authors are grateful to Marin Petkovic for fruitful discussions and the anonymous referee for many valuable comments that have improved the presentation of the manuscript. National Science Foundation for support through grant DMS-1211359 is also appreciated.

\end{document}